\documentclass[twocolumn]{autart}

\usepackage{etoolbox}

\makeatletter
\g@addto@macro\normalsize{%
  \setlength\abovedisplayskip{5pt plus 2pt minus 2pt}
  \setlength\belowdisplayskip{5pt plus 2pt minus 2pt}
}

\AfterEndPreamble{
  \setlength{\parskip}{0pt plus 0.1pt}
  \setlength{\parindent}{2.1em}
  
  \def\@listi{\leftmargin\leftmargini
    \topsep 3pt plus 1pt minus 1pt
    \parsep 0pt
    \itemsep 0pt}
}
\makeatother
\usepackage{graphicx}          
\usepackage[dvips]{epsfig}
\usepackage{latexsym,amssymb}
\usepackage{amsopn, amstext}
\usepackage{amsmath}
\usepackage{colortbl}
\usepackage{cite}
\usepackage{color}

\newcommand{\be}{\begin{eqnarray}}
\newcommand{\ee}{\end{eqnarray}}

\newtheorem{problem}{Problem}
\newtheorem{theorem}{Theorem}

\newtheorem{remark}{Remark}
\newtheorem{definition}{Definition}
\newtheorem{assumption}{Assumption}
\newtheorem{lemma}{Lemma}
\newtheorem{proof}{Proof}

\def\+{+}
\def\-{-}
\def\={=}

 \allowdisplaybreaks
\begin{document}

\begin{frontmatter}

\title{Decentralized Estimation and Control for Leader-Follower Networked Systems with Asymmetric Information Structure\thanksref{footnoteinfo}} 

\thanks[footnoteinfo]{This work was supported by National Natural Science Foundation of China under grants U24A20281, 62203300 and 61903210, Natural Science Foundation of Heilongjiang Province under grant LH2023F021, and Shandong Province Youth Innovation Team Plan under grant 2023KJ083. Corresponding author: Qingyuan Qi (qingyuan.qi@hrbeu.edu.cn).}

\author[a]{Yiting Luo},   
\author[b]{Wei Wang},              
\author[a]{Qingyuan Qi}*, 
\author[a]{Yang Liu}, 
\author[a]{Jian Xu} 

\address[a]{College of Intelligent Systems Science and Engineering and Qingdao Innovation and Development Center, Harbin Engineering University, Harbin 150001, China}                                                
\address[b]{School of Control Science and Engineering, Shandong University, Jinan 250061, China}

\begin{keyword} 
Decentralized estimation and optimal control, leader-follower networked system (LFNS), asymmetric information structure, leader-follower autonomous underwater vehicle (LF-AUV) system.                           
\end{keyword}  
                           
\begin{abstract} 
  In this paper, the decentralized estimation and linear quadratic (LQ) control problem for a leader-follower networked system (LFNS) is studied from the perspective of asymmetric information. Specifically, for a leader-follower network, the follower agent will be affected by the leader agent, while the follower agent will not affect the leader agent. Hence, the information sets accessed by the control variables of the leader agent and the follower agent are asymmetric, which will bring essential difficulties in finding the optimal control strategy. To this end, the orthogonal decomposition method is adopted to achieve the main results. The main contributions of this paper can be summarized as follows: Firstly, the optimal iterative estimation is derived using the conditional independence property established in this paper. Secondly, the optimal decentralized control strategy is derived by decoupling the forward-backward stochastic difference equations (FBSDEs), based on the derived optimal iterative estimation. Thirdly, the necessary and sufficient conditions for the feedback stabilization of the LFNS in infinite-horizon are derived. Finally, the proposed theoretical results are applied to solve the decentralized control problem of a leader-follower autonomous underwater vehicle (LF-AUV) system. The optimal control inputs for the AUVs are provided, and simulation results verify the effectiveness of the obtained results.                        
\end{abstract}

\end{frontmatter}

\section{Introduction}

Decentralized optimal control, as a specialized class of optimal control problems, has attracted considerable research attention in recent years, yielding significant theoretical and practical advancements \cite{yx2010,lm2011,ml2014,rrj2016,qxz2021}. Unlike centralized frameworks where a single agent possesses full system information, decentralized control inherently involves multiple agents that independently optimize a global performance metric using their local information sets. This distributed structure introduces unique challenges, particularly due to inter-agent coupling, which often complicates the derivation of optimal control strategies. A canonical example is the Witsenhausen's counterexample, which was demonstrated in \cite{w1968} that even for linear systems with asymmetric information, the optimal control strategy may be nonlinear, and an explicit analytical solution for this strategy has yet to be derived. Building on the seminal work of \cite{w1968}, research on optimal decentralized control has found widespread applications across diverse fields, including game-theoretic control, optimal local and remote control, and networked control. Firstly, in game-theoretic control, the interplay between strategic decision-making and information asymmetry has been studied. For example, tractable solutions for finite-horizon games and LQ settings were proposed by decoupling shared and private information structure \cite{nglb2014}. This approach was further extended to a dynamic multiplayer nonzero sum game with asymmetric information, as shown in \cite{glb2017}. Subsequently, in optimal local and remote control, global optimality of systems with asymmetric information was achieved through the coordination between the local controller and the remote controller. For instance, a dynamic program of the decentralized optimal control problem for a linear plant controlled by two controllers was provided by using the common information approach \cite{oan2016}. Building on this, the decentralized optimal control problem of systems with remote and local controllers was solved in \cite{lx2018}, where stabilization of the system was further considered. Recently, the jointly decentralized optimal local and remote controls for multiple systems were investigated via decoupling a group of FBSDEs \cite{qxzl2025}. Finally, in networked control, the decentralized controls for networked control systems with multiple controllers of asymmetric information were investigated in \cite{lqzx2021, wlll2024}. Specifically, the linear optimal decentralized control for networked control systems with asymmetric information was derived in \cite{lqzx2021}. As an extension of \cite{lqzx2021}, the closed-loop decentralized control was investigated for interconnected networked systems with asymmetric information, the optimal control strategy was derived, and stabilization conditions were proposed, \cite{wlll2024}.

This paper focuses on the decentralized control problem for a class of LFNSs, as illustrated in Fig. 1.
\begin{figure}
\centerline{\includegraphics[width=3.2in]{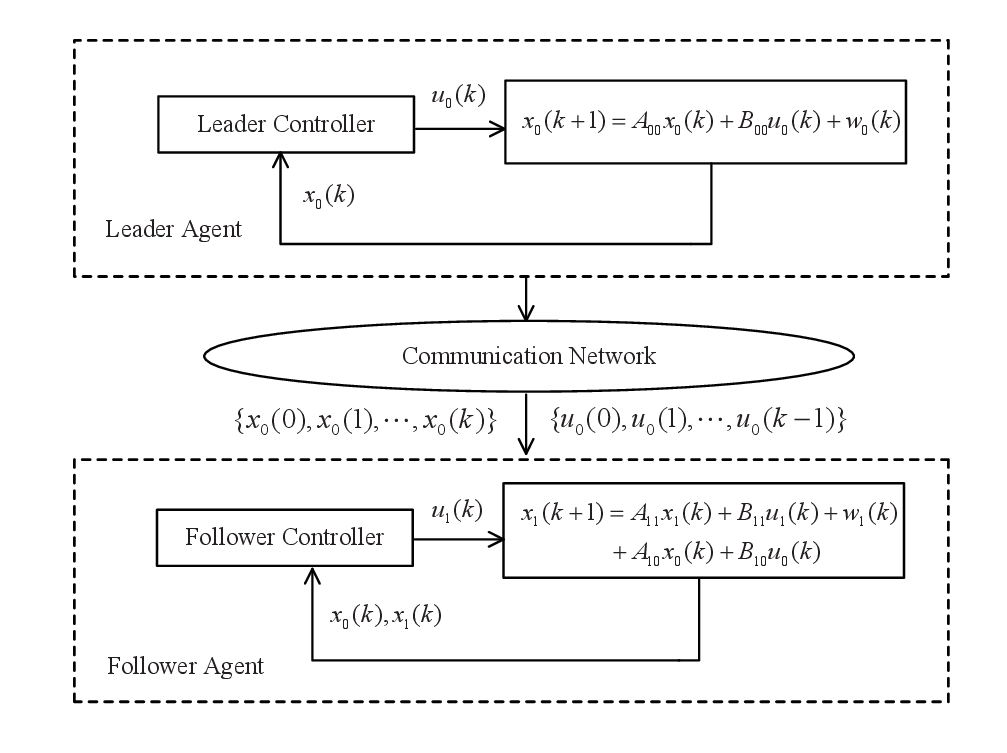}}
\caption{Schematic diagram of leader-follower networked system (LFNS).}
\label{6}
\end{figure}
In such systems, the states and actions of the leader agent influence the dynamics of the follower agent, while the follower's states and actions have no reciprocal effect on the leader agent. This unidirectional interaction creates a fundamental asymmetry in information availability: the leader agent has access to its own state and control history, whereas the follower agent operates under a nested information structure, relying on its local observations and the leader agent's influence. Specifically, the leader agent's information set, denoted as $\mathcal{S}_0(k)$, includes its state trajectories and control inputs, while the follower agent's information set, $\mathcal{S}_1(k)$, comprises its local state measurements and the leader agent's observable actions. This disparity in information access introduces significant challenges in deriving optimal control strategies, as the follower agent must infer the leader agent's intentions indirectly, while the leader agent must account for the follower agent's response in its decision-making. Despite these complexities, both agents share a common objective: minimizing a global cost function that reflects the collective performance of the system. This cost function typically penalizes deviations from desired states and excessive control efforts, balancing system performance with energy efficiency. By addressing these challenges, this paper aims to provide a comprehensive study for decentralized estimation and control of LFNSs with asymmetric information structure.

 The optimal decentralized control problem for LFNSs was first studied in \cite{hc1972}, where a partially nested information structure was defined and a optimal linear quadratic Gaussian (LQG) controller was derived. Afterwards, the optimal controller subject to a decentralization state feedback constraint based on a novel dynamic programming method was constructed in \cite{sl2010}. Subsequently, these results were extended by deriving explicit optimal controllers for the output feedback case of LQG systems with partially nested information structure under asymmetric information-sharing patterns in \cite{nkj2018}. Recently, the decentralized LQ control was investigated for non-Gaussian systems with major and minor agents in \cite{am2022}, where the conditional independence of the sates of the minor agents was established. It is worth noting that while the considered decentralized control problem for LFNSs has been explored in prior works \cite{hc1972,sl2010,nkj2018,am2022}, a comprehensive theoretical solution remains elusive. The reasons are as follows: (1) the nested and asymmetric information structure complicates the derivation of globally optimal solutions, as the follower agent’s decisions depend on the leader agent’s actions, which are not directly observable; (2) the coupling between forward and backward stochastic dynamics introduces nonlinearities that are difficult to decouple without advanced mathematical tools; and (3) the lack of a unified framework to simultaneously address estimation and control under asymmetric information constraints. These challenges underscore the need for novel methodologies to bridge the gap between theoretical insights and practical solutions, which motivates the contributions of this paper.

Beyond its theoretical implications, the decentralized control problem for LFNSs holds potential practical value in emerging applications \cite{ils2007,vrc2011,dzry2016}. With the rapid advancement of networked autonomous systems, particularly in swarm robotics and unmanned systems, many real-world challenges can be framed as decentralized optimal control problems. For example, in AUV formation tracking, a leader AUV typically guides follower AUVs to collaboratively track a desired trajectory while avoiding collisions and environmental disturbances. However, existing studies \cite{gmt2007,lyz2019,wpz2023,dzlcwm2024,lnhkb2024} often simplify the problem by assuming symmetric information structure or neglecting the impact of asymmetric interactions. For instance, as demonstrated in \cite{gmt2007}, prior approaches predominantly focus on stability guarantees under idealized communication conditions, overlooking the inherent information asymmetry between leader and follower agents. This simplification limits their applicability to real-world scenarios where follower agents lack direct access to the leader agent's full states or decision-making logic. In practice, asymmetric information structure arises naturally due to sensor limitations, communication latency, or hierarchical operational roles, as observed in recent field experiments with multi-AUVs systems \cite{yxyg2023}. 

This paper proposes a systematic approach to decentralized estimation and control for LFNSs under asymmetric information. By integrating orthogonal decomposition with FBSDEs, we address the inherent challenges of nested information structure and inter-agent coupling. Overall, the innovative contributions of this paper lie primarily in the following aspects: By developing the conditional independence property, an optimal iterative estimator for the LFNS with asymmetric information structure is proposed. Based on this estimator, the optimal decentralized control strategy is derived by decoupling the FBSDEs, the challenge posed by asymmetric information is addressed through the application of an orthogonal decomposition method. Furthermore, the necessary and sufficient conditions for feedback stabilization of the LFNS in infinite-horizon scenarios are characterized. To validate the theoretical results, the proposed framework is successfully applied to a LF-AUV system, with numerical simulations demonstrating its practical effectiveness. 

The remainder of this paper is given as follows. Section \ref{ss2} discusses the existence and uniqueness of the optimal decentralized control strategy for the LFNS. In Section \ref{ss3}, the optimal iterative estimation is derived. In Section \ref{ss4}, the optimal decentralized control strategy of the LFNS in finite-horizon is derived. In Section \ref{ss5}, necessary and sufficient conditions for the feedback stabilization of the LFNS in infinite-horizon are derived. Section \ref{ss6} presents its applications in a LF-AUV system. Section \ref{ss7} shows simulation results of the research problem, demonstrating the effectiveness of the optimal decentralized control strategy. Section \ref{ss8} concludes this paper.

{\bf Notation}: $\mathbb{R}^{n}$ denotes the $n$-dimensional Euclidean space; $I_n$ denotes the identity matrix with dimension $n$. $A^\top$ means the transpose of matrix $A$, $A^{-1}$ means the inverse of matrix $A$, and symmetric matrix $A>0$ (or $A\geq0$) means that $A$ is positive definite (or positive semi-definite). $Tr(\cdot)$ represents the trace of a matrix; $\rho(\cdot)$ represents the spectral radius of a matrix. $\mathbb{E}[\cdot]$ represents the mathematical expectation; $\mathbb{E}[\cdot|\sigma(Y)]$ signifies the conditional mathematical expectation with respect to the $\sigma-$ algebra generated by the random vector $Y$. $f(\cdot)$ represents probability density; $f(\cdot|Y)$ signifies conditional probability density with respect to $Y$. Random vector $x\sim \mathcal{N}(\mu, \Sigma)$ means $x$ obeys the normal distribution with mean $\mu$ and covariance $\Sigma$. For the sake of discussion, set $\int_{-\infty}^{+\infty}\cdots\int_{-\infty}^{+\infty}$ as $\int_{\mathbb{R}^{n}}$, and the mathematical expectation of the random vector $X$ with dimension $n$ is $\mathbb{E}[X(t)]=\int_{\mathbb{R}^{n}}Xf(X)dX$. $||X||$ denotes the Euclidean 2-norm of vector $X$. $|y|$ represents the absolute value of the variable $y$.

\section{Problem Formulation and Preliminaries}\label{ss2}

\subsection{Problem Formulation}
In this paper, we shall investigate the following LFNS:
\begin{equation}\label{sta1}
\left\{ \begin{array}{ll}
 x_0(k+1) & =A_{00}x_0(k)+B_{00}u_0(k)+w_0(k),\\
  x_1(k+1) &= A_{11}x_1(k)+B_{11}u_1(k)+w_1(k)\\
&+A_{10}x_0(k)+B_{10}u_0(k),
\end{array} \right.
\end{equation}
in which the integer $k$ is the time instant, $k=0,1,2,\cdots $. $x_0(k)\in\mathbb{R}^{n}$ and $x_1(k)\in\mathbb{R}^{n}$, represent the state information at time $k$ of the leader agent and the follower agent, respectively. $u_0(k)\in\mathbb{R}^{m_1}$ and $u_1(k)\in\mathbb{R}^{m_2}$ are the control inputs of the leader agent and the follower agent, respectively. $A_{00}, A_{10}, A_{11}, B_{00}, B_{10}$ and $B_{11}$ are constant matrices with appropriate dimensions. Both $w_0(k)$ and $w_1(k)$ are the Gaussian white noises satisfying $w_i(k)\sim\mathcal{N}(0,\Sigma_{w_i})$, and the initial states $x_0(0)$ and $x_1(0)$ satisfy $x_i(0)\sim\mathcal{N}(\Bar{x}_i, \Sigma_{x_i})$, $i=0,1$. Without loss of generality, the initial states and the noise vectors at each step $\{x_0(0), x_1(0), w_0(0), \cdots, w_0(k), w_1(0), \cdots, w_1(k)\}$ are all assumed to be mutually independent.

For the LFNS \eqref{sta1}, it can be seen that the state information and the control inputs of the leader agent will affect the follower agent, while the opposite is not true. In other words, for the considered scenario, the information sets accessed by the leader agent and the follower agent can be given as follows:
\begin{equation*}
\left\{ \begin{array}{ll}
\mathcal{S}_0(k)&=\{x_0(0),\cdots,x_0(k), u_0(0),\cdots,u_0(k-1)\},\notag\\
  \mathcal{S}_1(k)&=\{x_0(0),\cdots,x_0(k), u_0(0),\cdots,u_0(k-1),\notag\\
   &~~x_1(0),\cdots,x_1(k), u_1(0),\cdots,u_1(k-1)\}.
\end{array} \right.
\end{equation*}
Let us denote $\mathcal{F}_0(k), \mathcal{F}_1(k)$ as the $\sigma$-algebras generated by $\mathcal{S}_0(k), \mathcal{S}_1(k)$, respectively.

Consequently, in this paper, we assume that $u_0(k)$ and $u_1(k)$ satisfy the following assumption:

\begin{assumption}\label{ass1}
  $u_i(k)$ is $\mathcal{F}_i(k)$-measurable, $i=0,1$ and $\sum_{k=0}^{N}\mathbb{E}[u_i(k)^\top u_i(k)]<\infty$.
\end{assumption}

Corresponding with the LFNS \eqref{sta1}, the following quadratic cost function is introduced:
\begin{align}\label{cost1}
J_N&=\mathbb{E}\bigg[\sum_{k=0}^{N}[X^{\top}(k)QX(k)+ U^{\top}(k)RU(k)]\bigg]\notag\\
&+\mathbb{E}[X(N+1)^{\top}P(N+1)X(N+1)],
\end{align}
where $X(k)=[x_0^{\top}(k)~x_1^{\top}(k)]^\top$, $U(k)=[u_0^{\top}(k)~u_1^{\top}(k)]^\top$, $Q=\left[\hspace{-1mm}
  \begin{array}{cccc}
    Q_{00}&Q_{01}\\
    Q_{10}&Q_{11}
  \end{array}
\hspace{-1mm}\right]$, $R=\left[\hspace{-1mm}
  \begin{array}{cccc}
    R_{00}&R_{01}\\
    R_{10}&R_{11}
  \end{array}
\hspace{-1mm}\right]$, $P(N+1)=\left[\hspace{-1mm}
  \begin{array}{cccc}
    P_{00}(N+1)&P_{01}(N+1)\\
    P_{10}(N+1)&P_{11}(N+1)
  \end{array}
\hspace{-1mm}\right]$.

For the weighting matrices in \eqref{cost1}, we make the following standard assumption, \cite{s2002}.
\begin{assumption}\label{ass2}
  $Q\geq 0, R>0$, $P_{N+1}\geq 0$.
\end{assumption}

Now, we will introduce the main problem to be addressed in this paper.

\begin{problem}\label{prob1}
Suppose Assumptions \ref{ass1}-\ref{ass2} hold, find $u_i(k), i=0,1$ to minimize cost function \eqref{cost1}.
\end{problem}

\begin{remark}
The decentralized control problem for the LFNS \eqref{sta1} under asymmetric information (Problem \ref{prob1}) remains unresolved, as prior studies (e.g., \cite{hc1972,sl2010,nkj2018,am2022}) have yet to derive an iterative optimal estimator for the asymmetric information structure, a feasible globally optimal control strategy under leader-follower asymmetry, or necessary and sufficient feedback stabilization conditions for infinite-horizon LFNSs. These gaps stem from the inherent coupling between the leader and the follower stochastic dynamics and the lack of a unified framework to decouple estimation and control under heterogeneous information. Addressing these challenges constitutes the core contribution of this paper.
\end{remark}

\subsection{Solvability of Problem \ref{prob1}}
In this section, we will give the solvability conditions for Problem \ref{prob1}, which serves as preliminaries.

In the first place, to facilitate the discussion, we rewrite \eqref{sta1} as the following compact form:
\begin{align}\label{sta2}
X(k+1) & =AX(k)+BU(k)+W(k),
\end{align}
in which $A=\left[\hspace{-1mm}
  \begin{array}{ccc}
    A_{00}& 0\\
    A_{10}& A_{11}\\
  \end{array}
\hspace{-1mm}\right], B= \left[\hspace{-1mm}
  \begin{array}{ccc}
    B_{00} & 0\\
    B_{10} & B_{11}\\
  \end{array}
\hspace{-1mm}\right], W(k)= \left[\hspace{-1mm}
  \begin{array}{ccc}
    w_0(k)\\
    w_1(k)\\
  \end{array}
\hspace{-1mm}\right] $.

In the following, we will present the results for the solvability of Problem \ref{prob1}.

\begin{lemma}\label{lem1}
Under Assumptions \ref{ass1} and \ref{ass2}, Problem \ref{prob1} can be uniquely solved if and only if the stationary condition
\begin{align}\label{sc1}
0&=RU(k)+\mathbb{E}[B^{\top}\Theta(k)|\mathcal{F}_1(k)]
\end{align}
can be uniquely solved. Moreover, the costate $\Theta(k)$ satisfies the following backward difference equation:
\begin{align}\label{costa1}
\Theta(k-1)=\mathbb{E}[A^{\top}\Theta(k)|\mathcal{F}_1(k)]+QX(k),
\end{align}
with the terminal condition
\begin{align}\label{tc}
\Theta(N)=P(N+1)X(N+1).
\end{align}
\end{lemma}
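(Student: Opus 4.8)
The plan is to treat Problem \ref{prob1} as a constrained stochastic optimization over the admissible class fixed by Assumption \ref{ass1}, and to apply a Lagrange-multiplier / variational argument adapted to the asymmetric $\sigma$-algebra structure. First I would note that, under Assumption \ref{ass2}, the cost \eqref{cost1} is a convex (indeed, strictly convex in $U$ because $R>0$) quadratic functional of the admissible controls, while the dynamics \eqref{sta2} are affine; hence $J_N$ is strictly convex and coercive on the admissible set, so a minimizer exists and is unique provided the stationarity system is solvable. This reduces the ``if and only if'' to showing that the first-order condition is both necessary and sufficient. To get the first-order condition, I would introduce the costate sequence $\Theta(k)$ via the adjoint recursion \eqref{costa1}–\eqref{tc}, which is precisely the discrete-time backward equation obtained by differentiating the Lagrangian $\mathbb{E}\big[\sum_k (X^\top QX + U^\top RU) + X(N{+}1)^\top P(N{+}1)X(N{+}1) + \sum_k \Theta(k)^\top(AX(k)+BU(k)+W(k)-X(k{+}1))\big]$ with respect to the state increments $X(k)$.

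Next I would compute the Gâteaux derivative of $J_N$ along an admissible perturbation $\delta u_i(k)$ that is $\mathcal{F}_i(k)$-measurable, $i=0,1$. Propagating $\delta X$ through the linear dynamics and using summation by parts against the adjoint equation, the cross terms telescope and the total variation collapses to $\mathbb{E}\big[\sum_{k=0}^N \delta U(k)^\top\big(RU(k)+B^\top\Theta(k)\big)\big]$. Since $\delta U(k)$ ranges over all square-integrable $\mathcal{F}_1(k)$-measurable increments for the follower block and $\mathcal{F}_0(k)$-measurable ones for the leader block, the vanishing of this inner product for all such perturbations is equivalent, by the defining property of conditional expectation (the projection characterization), to $\mathbb{E}[RU(k)+B^\top\Theta(k)\mid\mathcal{F}_1(k)]=0$ — here the key point is that $U(k)$ is already $\mathcal{F}_1(k)$-measurable (its leader block is even $\mathcal{F}_0(k)\subset\mathcal{F}_1(k)$-measurable), so $\mathbb{E}[RU(k)\mid\mathcal{F}_1(k)]=RU(k)$, which is exactly \eqref{sc1}. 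I would present this as a single stationarity identity on $\mathcal{F}_1(k)$, remarking that projecting further onto $\mathcal{F}_0(k)$ recovers the leader-block equation.

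For sufficiency and uniqueness, I would invoke strict convexity: any admissible $U$ satisfying \eqref{sc1} makes the first variation vanish, and the second variation is $\mathbb{E}[\sum_k \delta U(k)^\top R\,\delta U(k)]\ge 0$ with equality only if $\delta U\equiv 0$ because $R>0$; hence such a $U$ is the unique global minimizer, and conversely the unique minimizer must satisfy \eqref{sc1}. Equivalence of ``Problem \ref{prob1} uniquely solvable'' with ``the stationary equation \eqref{sc1} uniquely solvable'' then follows because the costate $\Theta$ is an affine functional of $X$ (through \eqref{costa1}–\eqref{tc}), so \eqref{sc1} is itself a linear equation in $U$ whose unique solvability is equivalent to the invertibility of the associated operator.

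\textbf{Main obstacle.} The delicate step is the variational computation under the \emph{nested, asymmetric} information constraint: one must be careful that the admissible perturbations $\delta u_0$ and $\delta u_1$ lie in different $\sigma$-algebras, that conditioning the costate on $\mathcal{F}_1(k)$ in \eqref{costa1} does not destroy the telescoping in the summation-by-parts step, and that the tower property is applied correctly so the two separate stationarity conditions for the leader and follower blocks can be written as the single conditional identity \eqref{sc1}. Verifying that the measurability of $U(k)$ makes the $RU(k)$ term pass through the conditional expectation — and that no extra terms from the coupling $A_{10},B_{10}$ survive — is where the argument must be done with care rather than by routine manipulation.
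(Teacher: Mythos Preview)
Your variational argument is correct and is the standard derivation of a discrete-time stochastic maximum principle under a nested information pattern. The paper itself does not give a proof: it states only that the result ``can be induced from \textit{Theorem 1} of \cite{qxzl2025}'' and omits all detail, so there is no in-paper argument to compare against; your proposal supplies precisely what the paper defers to a reference.

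Two minor remarks. First, your stated second variation $\mathbb{E}\big[\sum_k \delta U(k)^\top R\,\delta U(k)\big]$ is incomplete: it omits the $\delta X(k)^\top Q\,\delta X(k)$ and terminal $\delta X(N{+}1)^\top P(N{+}1)\,\delta X(N{+}1)$ contributions. These are present but nonnegative under Assumption \ref{ass2}, so strict convexity in $U$ (and hence uniqueness) still follows from $R>0$ exactly as you claim. Second, your ``main obstacle'' is correctly identified and resolves cleanly once you note that $\delta X(k)$, being a linear functional of $\delta U(0),\ldots,\delta U(k-1)$, is $\mathcal{F}_1(k)$-measurable; then $\mathbb{E}\big[\delta X(k)^\top \mathbb{E}[A^\top\Theta(k)\mid\mathcal{F}_1(k)]\big]=\mathbb{E}\big[\delta X(k)^\top A^\top\Theta(k)\big]$ by the tower property, and the telescoping goes through unchanged.
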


\begin{proof}
 To avoid repetition, we omit the detailed proof, which can be induced from \textit{Theorem 1} of \cite{qxzl2025}.
\end{proof}

\begin{remark}\label{rem1}
It is noted that the LNFS \eqref{sta1} is a forward iterative difference equation, while the costate equation \eqref{costa1} is a backward iterative difference equation. Together with the stationary condition \eqref{sc1}, they form the following FBSDEs:
\begin{equation}\label{FBSDEs}
\left\{ \begin{array}{ll}
 x_0(k+1) & =A_{00}x_0(k)+B_{00}u_0(k)+w_0(k),\\
  x_1(k+1) &= A_{11}x_1(k)+B_{11}u_1(k)+w_1(k)\\
&+A_{10}x_0(k)+B_{10}u_0(k),\\
\Theta(k-1)&=\mathbb{E}[A^{\top}\Theta(k)|\mathcal{F}_1(k)]+QX(k),\\
\Theta(N)&=P(N+1)X(N+1),\\
0&=RU(k)+\mathbb{E}[B^{\top}\Theta(k)|\mathcal{F}_1(k)].
\end{array} \right.
\end{equation}
Therefore, it can be concluded from Lemma \ref{lem1} that solving Problem \ref{prob1} is equivalent to solving the FBSDEs \eqref{FBSDEs}.
\end{remark}

\section{Optimal Decentralized Estimation}\label{ss3}

From Lemma \ref{lem1} and Remark \ref{rem1}, it can be seen that obtaining the optimal control strategy requires calculating a series of conditional mathematical expectations from a probabilistic perspective. This motivates us to introduce relevant results for calculating conditional expectations in this section. In the following, we will give the algorithms of computing the conditional expectation, which is equivalent to solving a type of optimal estimation problems.

For the sake of discussion, we introduce the following symbols ($k=0,\cdots,N, i=0, 1$) :
\begin{equation}\label{est1}
\left\{ \begin{array}{ll}
\hat{x}_i(k|k)&=\mathbb{E}[x_i(k)|\mathcal{F}_0(k)],\\
\tilde{x}_i(k|k)&=x_i(k)-\hat{x}_i(k|k),\\
\hat{x}_i(k+1|k)&=\mathbb{E}[x_i(k+1)|\mathcal{F}_0(k)],\\
\tilde{x}_i(k+1|k)&={x}_i(k+1)-\hat{x}_i(k+1|k),\\
\hat{u}_i(k)&=\mathbb{E}[u_i(k)|\mathcal{F}_0(k)],\\
\tilde{u}_i(k)&=u_i(k)-\hat{u}_i(k).
\end{array} \right.
\end{equation}

Before presenting the main results of this section, we will introduce an important result on the conditional independence in the following lemma.

\begin{lemma}\label{lem2}
Under Assumptions \ref{ass1} and \ref{ass2}, $x_0(k)$ and $x_1(k)$ are conditionally independent of $\mathcal{F}_0(k-1)$, $k=1,\cdots, N$. In other words, the following relationship holds:
\begin{align}\label{est2}
\hspace{-2mm}\mathbb{E}[x_0(k)x_1^{\top}(k)|\mathcal{F}_0(k-1)]=\hat{x}_0(k|k-1)\hat{x}_1^{\top}(k|k-1).
\end{align}
\end{lemma}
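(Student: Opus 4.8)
The plan is to prove the conditional independence of $x_0(k)$ and $x_1(k)$ given $\mathcal{F}_0(k-1)$ by induction on $k$, exploiting the triangular (leader-follower) structure of the dynamics \eqref{sta1} together with the measurability constraints of Assumption \ref{ass1}. The base case $k=1$ follows directly: $x_0(1)=A_{00}x_0(0)+B_{00}u_0(0)+w_0(0)$ and $x_1(1)=A_{11}x_1(0)+B_{11}u_1(0)+w_1(0)+A_{10}x_0(0)+B_{10}u_0(0)$; since $u_0(0)$ is $\mathcal{F}_0(0)$-measurable, $\mathcal{F}_0(0)=\sigma(x_0(0))$, and conditioning on $\mathcal{F}_0(0)$ fixes $x_0(0),u_0(0)$ and hence $x_0(1)$ entirely, so $x_0(1)$ is $\mathcal{F}_0(0)$-measurable and trivially conditionally independent of anything given $\mathcal{F}_0(0)$; in particular \eqref{est2} holds for $k=1$ because the left side equals $x_0(1)\,\mathbb{E}[x_1^\top(1)\mid\mathcal{F}_0(0)]=\hat{x}_0(1|0)\hat{x}_1^\top(1|0)$.

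For the inductive step, I would first observe more precisely what conditioning on $\mathcal{F}_0(k-1)$ does. From the leader dynamics alone, $x_0(k)=A_{00}x_0(k-1)+B_{00}u_0(k-1)+w_0(k-1)$, and both $x_0(k-1)$ and $u_0(k-1)$ are $\mathcal{F}_0(k-1)$-measurable; therefore the only randomness left in $x_0(k)$ after conditioning on $\mathcal{F}_0(k-1)$ is $w_0(k-1)$, which is independent of $\mathcal{F}_0(k-1)$ and of $\mathcal{F}_1(k-1)$. Meanwhile $x_1(k)=A_{11}x_1(k-1)+B_{11}u_1(k-1)+w_1(k-1)+A_{10}x_0(k-1)+B_{10}u_0(k-1)$; here $x_1(k-1)$ and $u_1(k-1)$ are $\mathcal{F}_1(k-1)$-measurable but not $\mathcal{F}_0(k-1)$-measurable, and $w_1(k-1)$ is independent of everything up to time $k-1$. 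The key structural fact is that the leader's future noise $w_0(k-1)$ does not enter $x_1(k)$ at all, so given $\mathcal{F}_0(k-1)$ the residual randomness of $x_0(k)$ (namely $w_0(k-1)$) is independent of the collection $\{x_1(k-1),u_1(k-1),w_1(k-1)\}$ that generates the residual randomness of $x_1(k)$; this independence is a consequence of the mutual independence of $\{x_i(0),w_i(j)\}$ and the $\mathcal{F}_1$-measurability of the controls.

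To turn this into conditional independence of $x_0(k)$ and $x_1(k)$ themselves, I would argue at the level of conditional densities (the paper works with Gaussian-type densities, so this is natural): conditioned on $\mathcal{F}_0(k-1)$, write the joint density of $(x_0(k),x_1(k))$ and show it factors. Concretely, $f(x_0(k),x_1(k)\mid\mathcal{F}_0(k-1)) = f(x_0(k)\mid\mathcal{F}_0(k-1))\, f(x_1(k)\mid x_0(k),\mathcal{F}_0(k-1))$, and I would show the second factor does not actually depend on $x_0(k)$: the dependence of $x_1(k)$ on the realization of $x_0(k)$ would have to come through $w_0(k-1)$ (the only part of $x_0(k)$ that is random given $\mathcal{F}_0(k-1)$), but $w_0(k-1)$ is independent of $w_1(k-1)$ and of $(x_1(k-1),u_1(k-1))$ given $\mathcal{F}_0(k-1)$ — for the latter one may invoke the induction hypothesis to control the conditional law of $x_1(k-1)$ given $\mathcal{F}_0(k-1)$. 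Hence $f(x_1(k)\mid x_0(k),\mathcal{F}_0(k-1))=f(x_1(k)\mid\mathcal{F}_0(k-1))$, giving the factorization and therefore \eqref{est2} by taking conditional expectations of the product.

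The main obstacle I anticipate is the rigorous handling of the controls: because $u_1(k-1)$ is only $\mathcal{F}_1(k-1)$-measurable (and $u_1$ could in principle be a nonlinear function of the follower's information), one cannot simply treat $x_1(k-1)$ and the controls as jointly Gaussian with $w_0(k-1)$; the argument must rely on the independence of $w_0(k-1)$ from the entire $\sigma$-algebra $\mathcal{F}_1(k-1)$ (which holds by the mutual-independence assumption on noises and initial states, since $\mathcal{F}_1(k-1)$ is generated by $\{x_0(0),x_1(0),w_0(0),\dots,w_0(k-2),w_1(0),\dots,w_1(k-2)\}$ together with past controls, all of which are functions thereof) rather than on any Gaussian structure. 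Once that independence is cleanly stated, the factorization of the conditional density — and hence the claimed identity — follows. I would also note that the induction hypothesis is what lets us propagate the needed control on the conditional law of $x_1(k-1)$ given $\mathcal{F}_0(k-1)$ forward to step $k$.
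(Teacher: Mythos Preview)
Your base case contains a slip: you claim that conditioning on $\mathcal{F}_0(0)$ ``fixes $x_0(0),u_0(0)$ and hence $x_0(1)$ entirely, so $x_0(1)$ is $\mathcal{F}_0(0)$-measurable.'' This is false, since $x_0(1)=A_{00}x_0(0)+B_{00}u_0(0)+w_0(0)$ and $w_0(0)$ is \emph{not} $\mathcal{F}_0(0)$-measurable. Consequently the identity $\mathbb{E}[x_0(1)x_1^\top(1)\mid\mathcal{F}_0(0)]=x_0(1)\,\mathbb{E}[x_1^\top(1)\mid\mathcal{F}_0(0)]$ does not follow as you state. The fix is immediate: simply run for $k=1$ the same argument you give in the inductive step (the residual randomness in $x_0(1)$ is $w_0(0)$, which is independent of $\{x_1(0),u_1(0),w_1(0)\}$), so the error is local rather than structural.

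Beyond that, your route differs from the paper's in two ways worth noting. First, the paper does \emph{not} use induction: it observes directly that $\tilde{x}_0(k|k-1)=w_0(k-1)$, so $\mathbb{E}[\tilde{x}_0(k|k-1)\tilde{x}_1^{\top}(k|k-1)\mid\mathcal{F}_0(k-1)]=0$ by independence of $w_0(k-1)$ from everything else, and then expands this to obtain \eqref{est2}. Your induction hypothesis is never actually needed, because the independence of $w_0(k-1)$ from all of $\mathcal{F}_1(k-1)\vee\sigma(w_1(k-1))$ already gives the factorization at every $k$ in one shot. Second, to pass from \eqref{est2} (conditional uncorrelation) to conditional \emph{independence}, the paper simply asserts that $x_0(k),x_1(k)$ are Gaussian. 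You instead argue via factorization of conditional densities and carefully flag that $u_1(k-1)$ need not be linear, relying only on the independence of $w_0(k-1)$ from $\mathcal{F}_1(k-1)$; this is in fact more robust than the paper's Gaussianity claim, which is delicate precisely because Assumption~\ref{ass1} allows nonlinear $\mathcal{F}_1$-measurable $u_1$.
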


\begin{proof}
First, we will prove that the states $x_0(k)$ and $x_1(k)$ are conditionally uncorrelated of $\mathcal{F}_0(k-1)$.

Using \eqref{sta1} and \eqref{est1}, for $k=1,\cdots,N,$ we have
\begin{equation}\label{est4}
\left\{ \begin{array}{ll}
\hat{x}_1(k|k-1)&=A_{11}\hat{x}_1(k-1|k-1)+B_{11}\hat{u}_1(k-1)\\
&+A_{10}x_0(k-1)+B_{10}u_0(k-1),\\
\hat{x}_0(k|k-1)&=A_{00}x_0(k-1)+B_{00}u_0(k-1),\\
\tilde{x}_1(k|k-1)&=A_{11}\tilde{x}_1(k-1|k-1)+B_{11}\tilde{u}_1(k-1)\\
&+w_1(k-1),\\
\tilde{x}_0(k|k-1)&=w_0(k-1).
\end{array} \right.
\end{equation}

From \eqref{sta1}, $x_0(k)$, $x_1(k)$, $\{w_0(k)\}_{k=0}^{N}$, $\{w_1(k)\}_{k=0}^{N},$ $k=0,\cdots,N,$ are independent of each other and follow the Gaussian distribution. Accordingly, we have
\begin{align*}
&\mathbb{E}[\tilde{x}_0(k|k-1)\tilde{x}_1^{\top}(k|k-1)|\mathcal{F}_0(k-1)]=0.
\end{align*}

Using \eqref{est1}, there holds:
\begin{align*}
&\mathbb{E}[\tilde{x}_0(k|k-1)\tilde{x}_1^{\top}(k|k-1)|\mathcal{F}_0(k-1)]\notag\\
&=\mathbb{E}\big[[x_0(k)-\hat{x}_0(k|k-1)][x_1(k)\notag\\
&-\hat{x}_1(k|k-1)]^{\top}|\mathcal{F}_0(k-1)\big]\notag\\
&=\mathbb{E}[x_0(k)x_1^{\top}(k)-x_0(k)\hat{x}_1^{\top}(k|k-1)\notag\\
&-\hat{x}_0(k|k-1)x_1^{\top}(k)\notag\\
&+\hat{x}_0(k|k-1)\hat{x}_1^{\top}(k|k-1)|\mathcal{F}_0(k-1)]\notag\\
&=\mathbb{E}[x_0(k)x_1^{\top}(k)|\mathcal{F}_0(k-1)]\notag\\
&-\mathbb{E}[x_0(k)\hat{x}_1^{\top}(k|k-1)|\mathcal{F}_0(k-1)]\notag\\
&-\mathbb{E}[\hat{x}_0(k|k-1)x_ 1^{\top}(k)|\mathcal{F}_0(k-1)]\notag\\
&+\mathbb{E}[\hat{x}_0(k|k-1)\hat{x}_1^{\top}(k|k-1)|\mathcal{F}_0(k-1)].
\end{align*}

From \eqref{est1}, using the properties of conditional mathematical expectation, we have
\begin{align*}
&\mathbb{E}[\tilde{x}_0(k|k-1)\tilde{x}_1^{\top}(k|k-1)|\mathcal{F}_0(k-1)]\notag\\
&=\mathbb{E}[x_0(k)x_1^{\top}(k)|\mathcal{F}_0(k-1)]\notag\\
&-\mathbb{E}[x_0(k)|\mathcal{F}_0(k-1)]\hat{x}_1^{\top}(k|k-1)\notag\\
&-\hat{x}_0(k|k-1)\mathbb{E}[x_1^{\top}(k)|\mathcal{F}_0(k-1)]\notag\\
&+\hat{x}_0(k|k-1)\hat{x}_1^{\top}(k|k-1)\notag\\
&=\mathbb{E}[x_0(k)x_1^{\top}(k)|\mathcal{F}_0(k-1)]-\hat{x}_0(k|k-1)\hat{x}_1^{\top}(k|k-1)\notag\\
&-\hat{x}_0(k|k-1)\hat{x}_1^{\top}(k|k-1)+\hat{x}_0(k|k-1)\hat{x}_1^{\top}(k|k-1)\notag\\
&=\mathbb{E}[x_0(k)x_1^{\top}(k)|\mathcal{F}_0(k-1)]-\hat{x}_0(k|k-1)\hat{x}_1^{\top}(k|k-1).
\end{align*}
Then, \eqref{est2} can be derived. And we can know that the states $x_0(k)$ and $x_1(k)$ are conditionally uncorrelated of $\mathcal{F}_0(k-1)$.

Moreover, we know $x_0(k)$ and $x_1(k)$ follow the
Gaussian distribution. Hence the states $x_0(k)$ and $x_1(k)$ are conditionally independent of $\mathcal{F}_0(k-1)$.
\end{proof}

Consequently, we will introduce a preliminary result, which shows that the conditional expectation is essentially the optimal estimation in the sense of minimizing the mean square error (MSE) covariance.

\begin{lemma} \label{lem3} 
Let $\mathcal{X}\in \mathbb{R}^{n_1}$  and $\mathcal{Y}\in \mathbb{R}^{n_2}$ be random vectors, where $\mathcal{Y}$ is a noisy measurement of $\mathcal{X}$. The optimal estimate of $\mathcal{X}$ given $\mathcal{Y}$, in the sense of minimizing the MSE, is the conditional expectation of $\mathcal{X}$ given $\mathcal{Y}$, denoted by $ \mathbb{E}[\mathcal{X} | \mathcal{Y}]$. That is,
\begin{align*}
\hat{\mathcal{X}} = \mathbb{E}[\mathcal{X} | \mathcal{Y}].
\end{align*}
This optimal estimate minimizes the MSE, which is defined as:
\begin{align*}
\text{MSE}(\hat{\mathcal{X}}) = \mathbb{E} \left[ \|\mathcal{X} - \hat{\mathcal{X}}(\mathcal{Y}) \|^2 \right],
\end{align*}
where $\hat{\mathcal{X}}(\mathcal{Y})$ is any estimate of $\mathcal{X}$ based on the observation $\mathcal{Y} = y$, and the estimate that minimizes the MSE is $\hat{\mathcal{X}}(\mathcal{Y}) = \mathbb{E}[\mathcal{X} | \mathcal{Y}]$.
\end{lemma}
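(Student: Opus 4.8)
The plan is to establish the result via the standard orthogonality (bias--variance) decomposition of the mean square error. Fix an arbitrary measurable estimator $g(\mathcal{Y})$ of $\mathcal{X}$ with $\mathbb{E}[\|g(\mathcal{Y})\|^2]<\infty$ --- this square-integrability restriction ensures all the expectations below are finite --- and abbreviate $\hat{\mathcal{X}}=\mathbb{E}[\mathcal{X}\mid\mathcal{Y}]$. First I would add and subtract $\hat{\mathcal{X}}$ inside the norm and expand the square:
\begin{align*}
\mathbb{E}\big[\|\mathcal{X}-g(\mathcal{Y})\|^2\big]
&=\mathbb{E}\big[\|\mathcal{X}-\hat{\mathcal{X}}\|^2\big]+\mathbb{E}\big[\|\hat{\mathcal{X}}-g(\mathcal{Y})\|^2\big]\\
&\quad+2\,\mathbb{E}\big[(\mathcal{X}-\hat{\mathcal{X}})^{\top}(\hat{\mathcal{X}}-g(\mathcal{Y}))\big].
\end{align*}

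The key step is to show that the cross term vanishes. Since $\hat{\mathcal{X}}-g(\mathcal{Y})$ is $\sigma(\mathcal{Y})$-measurable, the tower property of conditional expectation yields
\begin{align*}
&\mathbb{E}\big[(\mathcal{X}-\hat{\mathcal{X}})^{\top}(\hat{\mathcal{X}}-g(\mathcal{Y}))\big]\\
&\quad=\mathbb{E}\Big[\mathbb{E}\big[(\mathcal{X}-\hat{\mathcal{X}})^{\top}\mid\mathcal{Y}\big]\,(\hat{\mathcal{X}}-g(\mathcal{Y}))\Big],
\end{align*}
and $\mathbb{E}[\mathcal{X}-\hat{\mathcal{X}}\mid\mathcal{Y}]=\mathbb{E}[\mathcal{X}\mid\mathcal{Y}]-\hat{\mathcal{X}}=0$ directly from the definition of $\hat{\mathcal{X}}$, so the inner conditional expectation is zero and hence the whole cross term is zero. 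Therefore
\begin{align*}
\mathbb{E}\big[\|\mathcal{X}-g(\mathcal{Y})\|^2\big]
&=\mathbb{E}\big[\|\mathcal{X}-\hat{\mathcal{X}}\|^2\big]+\mathbb{E}\big[\|\hat{\mathcal{X}}-g(\mathcal{Y})\|^2\big]\\
&\ge\mathbb{E}\big[\|\mathcal{X}-\hat{\mathcal{X}}\|^2\big],
\end{align*}
with equality if and only if $g(\mathcal{Y})=\hat{\mathcal{X}}$ almost surely. This simultaneously shows that $\hat{\mathcal{X}}=\mathbb{E}[\mathcal{X}\mid\mathcal{Y}]$ is a minimizer of the MSE and that it is essentially unique.

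As an alternative derivation worth recording, one can argue conditionally: for each fixed value $\mathcal{Y}=y$ minimize the map $c\mapsto\mathbb{E}[\|\mathcal{X}-c\|^2\mid\mathcal{Y}=y]$ over deterministic $c\in\mathbb{R}^{n_1}$; setting the gradient in $c$ to zero gives $c=\mathbb{E}[\mathcal{X}\mid\mathcal{Y}=y]$, and averaging over the law of $\mathcal{Y}$ recovers the claim. As for difficulty, there is no substantive obstacle: the result is classical, and the only points that require attention are the integrability bookkeeping (so that every expectation is well defined) and the correct invocation of the tower property together with the $\sigma(\mathcal{Y})$-measurability of $\hat{\mathcal{X}}-g(\mathcal{Y})$ in order to annihilate the cross term.
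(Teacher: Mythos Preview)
Your argument is correct and is precisely the classical orthogonal-projection proof: decompose $\mathcal{X}-g(\mathcal{Y})$ through $\hat{\mathcal{X}}=\mathbb{E}[\mathcal{X}\mid\mathcal{Y}]$, use the tower property together with $\sigma(\mathcal{Y})$-measurability to kill the cross term, and read off both optimality and essential uniqueness. The paper itself does not supply a proof of this lemma at all; it simply refers the reader to \textit{Theorem 3.1, Section 2.3} of Anderson and Moore's \emph{Optimal Filtering}, so your write-up is strictly more complete than what the paper provides, and it matches the standard argument one finds in that reference.
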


\begin{proof}
A comprehensive proof can refer to \textit{Theorem 3.1, Section 2.3} of \cite{am1979}.
\end{proof}

According to Lemma \ref{lem3}, computing the optimal estimation in the sense of minimizing the MSE is equivalent to calculating the conditional expectation. Next, we will use the conditional independence proven in Lemma \ref{lem2} to solve for the recursive $\hat{x}_i(k|k)$, which is presented in the following theorem.

\begin{theorem}\label{th1}
Under Assumptions \ref{ass1} and \ref{ass2}, the recursive optimal estimation (conditional expectation) can be calculated by
\begin{align}\label{est5}
\hat{x}_1(k|k)&=A_{11}\hat{x}_1(k-1|k-1)+B_{11}\hat{u}_1(k-1)\notag\\
&+A_{10}{x}_0(k-1)+B_{10}{u}_0(k-1),
\end{align}
with the initial condition $\hat{x}_1(0|0)=\Bar{x}_1$.
\end{theorem}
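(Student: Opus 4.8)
The plan is to compute $\hat{x}_1(k|k) = \mathbb{E}[x_1(k)|\mathcal{F}_0(k)]$ directly from the dynamics \eqref{sta1} of the follower agent, using the law of iterated expectations to reduce everything to a one-step predictor, and then invoking Lemma \ref{lem2} to handle the one term that is not immediately linear. First I would write $x_1(k) = A_{11}x_1(k-1) + B_{11}u_1(k-1) + A_{10}x_0(k-1) + B_{10}u_0(k-1) + w_1(k-1)$ and apply $\mathbb{E}[\cdot|\mathcal{F}_0(k)]$ to both sides, exploiting linearity of conditional expectation. The terms $A_{10}x_0(k-1)$ and $B_{10}u_0(k-1)$ are $\mathcal{F}_0(k-1)$-measurable, hence $\mathcal{F}_0(k)$-measurable, so they pass through unchanged; the noise term $w_1(k-1)$ is independent of $\mathcal{F}_0(k)$ (by the mutual independence assumption on noises and initial states) and has zero mean, so it drops out.

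The remaining work is to show $\mathbb{E}[A_{11}x_1(k-1) + B_{11}u_1(k-1)\,|\,\mathcal{F}_0(k)] = A_{11}\hat{x}_1(k-1|k-1) + B_{11}\hat{u}_1(k-1)$, i.e. that conditioning on $\mathcal{F}_0(k)$ is the same as conditioning on $\mathcal{F}_0(k-1)$ for these two quantities. The plan here is to argue that $\mathcal{F}_0(k) = \sigma(\mathcal{F}_0(k-1) \cup \sigma(x_0(k)))$ (since $u_0(k-1)$ is already $\mathcal{F}_0(k-1)$-measurable by Assumption \ref{ass1}, the only genuinely new piece of information in $\mathcal{F}_0(k)$ is $x_0(k)$), and then to show that $x_1(k-1)$ and $u_1(k-1)$ are conditionally independent of $x_0(k)$ given $\mathcal{F}_0(k-1)$. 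This is exactly the content of Lemma \ref{lem2}: the states $x_0(k-1)$ and $x_1(k-1)$ are conditionally independent given $\mathcal{F}_0(k-2)$, and more to the point, $x_0(k) = A_{00}x_0(k-1) + B_{00}u_0(k-1) + w_0(k-1)$ where $w_0(k-1)$ is independent of everything in $\mathcal{F}_1(k-1)$; since $x_1(k-1)$ and $u_1(k-1)$ are $\mathcal{F}_1(k-1)$-measurable while $x_0(k-1)$ and $u_0(k-1)$ are $\mathcal{F}_0(k-1)$-measurable, the extra randomness in $x_0(k)$ beyond $\mathcal{F}_0(k-1)$ is just $w_0(k-1)$, which carries no information about $x_1(k-1)$ or $u_1(k-1)$. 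Hence $\mathbb{E}[x_1(k-1)|\mathcal{F}_0(k)] = \mathbb{E}[x_1(k-1)|\mathcal{F}_0(k-1)] = \hat{x}_1(k-1|k-1)$ and likewise for $u_1(k-1)$.

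I would then assemble the pieces to obtain \eqref{est5}, and verify the initial condition: at $k=0$, $\mathcal{F}_0(0) = \sigma(x_0(0))$, and since $x_1(0)$ is independent of $x_0(0)$ with mean $\bar{x}_1$, we get $\hat{x}_1(0|0) = \mathbb{E}[x_1(0)] = \bar{x}_1$. The main obstacle I anticipate is making the conditional-independence argument fully rigorous — specifically, carefully justifying that no information about $x_1(k-1)$ or $u_1(k-1)$ leaks through the new observation $x_0(k)$ given $\mathcal{F}_0(k-1)$. In the Gaussian/linear setting this follows cleanly from the independence structure of the noise and the fact that $w_0(k-1)$ is the only new source of randomness, but one has to be slightly careful because $u_1(k-1)$ is only assumed $\mathcal{F}_1(k-1)$-measurable (not necessarily linear), so the argument should rely on measurability and independence of $w_0(k-1)$ from $\mathcal{F}_1(k-1)$ rather than on joint Gaussianity of all variables. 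Everything else is routine bookkeeping with conditional expectations.
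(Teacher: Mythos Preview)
Your proposal is correct and rests on the same underlying idea as the paper --- the only new information in $\mathcal{F}_0(k)$ beyond $\mathcal{F}_0(k-1)$ is $w_0(k-1)$, which is independent of the follower's history --- but the two proofs package this differently. The paper works by induction: it first computes the one-step predictor $\hat{x}_1(k|k-1)$ from the dynamics, and then shows $\hat{x}_1(k|k)=\hat{x}_1(k|k-1)$ by invoking Lemma~\ref{lem2} (conditional independence of $x_0(k)$ and $x_1(k)$ given $\mathcal{F}_0(k-1)$) together with a density-factorization argument $f(x_1(k)\mid x_0(k),\mathcal{F}_0(k-1))=f(x_1(k)\mid \mathcal{F}_0(k-1))$. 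You instead expand $x_1(k)$ via the dynamics first and then reduce $\mathbb{E}[x_1(k-1)\mid\mathcal{F}_0(k)]$ and $\mathbb{E}[u_1(k-1)\mid\mathcal{F}_0(k)]$ to conditioning on $\mathcal{F}_0(k-1)$ directly, using that $w_0(k-1)$ is independent of $\mathcal{F}_1(k-1)$. Your route is slightly more elementary: it bypasses the density computation and, as you note, does not rely on joint Gaussianity (which the paper uses in Lemma~\ref{lem2} to pass from conditionally uncorrelated to conditionally independent); it also handles $u_1(k-1)$ explicitly, which the paper absorbs into the predictor step. One small clarification: the statement you actually need is not literally Lemma~\ref{lem2} (which concerns $x_0(k)$ and $x_1(k)$ at the same time $k$) but the closely related fact that $x_0(k)$ is conditionally independent of $(x_1(k-1),u_1(k-1))$ given $\mathcal{F}_0(k-1)$ --- your direct argument via $w_0(k-1)\perp\mathcal{F}_1(k-1)$ establishes exactly this, so the proof goes through.
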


\begin{proof}
We will adopt the induction method to derive the main results.

Firstly, noting that the initial states $x_0(0)\sim\mathcal{N}(\bar{x}_0,\Sigma_{x_0})$, $x_1(0)\sim\mathcal{N}(\bar{x}_1,\Sigma_{x_1})$, and $x_0(0)$, $x_1(0)$ are independent of each other, then there follows $\hat{x}_1(0|0)=\bar{x}_1.$

Using the LFNS \eqref{sta1} and \eqref{est1}, we have
\begin{align}\label{est6}
 \hat{x}_1(1|0)&=A_{11}\hat{x}_1(0|0)+B_{11}\hat{u}_1(0)\notag\\
 &+A_{10}x_0(0)+B_{10}u_0(0).
\end{align}

In the following, using \eqref{est1}, from the definition of the conditional expectation, we know $\hat{x}_1(1|1)$ can be calculated as:
\begin{align}\label{est7}
&\hat{x}_1(1|1)\notag\\
&=\mathbb{E}[x_1(1)|\mathcal{F}_0(1)]\notag\\
&=\mathbb{E}[x_1(1)|x_0(1),x_0(0)]\notag\\
&=\int_{\mathbb{R}^n}x_1(1) f \big(x_1(1)|x_0(1),x_0(0)\big)\ dx_1(1),
\end{align}
in which $f \big(x_1(1)|x_0(1),x_0(0)\big)$ is the conditional probability density of $x_1(1)$ given $x_0(1),x_0(0)$.

Moreover, according to Lemma \ref{lem2}, $x_0(1)$ and $x_1(1)$ are conditionally independent of $\mathcal{F}_0(0)$, then we have
\begin{align}\label{est8}
&f\big(x_1(1)|x_0(1), x_0(0)\big)\notag\\
=&\frac{f\big(x_1(1), x_0(1), x_0(0)\big)}{f\big(x_0(1), x_0(0)\big)}\notag\\
=&\frac{f \big(x_1(1), x_0(1)|x_0(0)\big)}{f \big(x_0(1)|x_0(0)\big)}\notag\\
=&\frac{f\big(x_1(1)|x_0(0)\big)f\big(x_0(1)|x_0(0)\big)}{f \big(x_0(1)| x_0(0)\big)}\notag\\
=&f \big(x_1(1)|x_0(0)\big).
\end{align}

Thus, by using \eqref{est8}, it can be derived from \eqref{est7} that
\begin{align}\label{est9}
&\hat{x}_1(1|1)\notag\\
&=\int_{\mathbb{R}^n} x_1(1) f \big(x_1(1)|x_0(0)\big)\ dx_1(1)\notag\\
&=\mathbb{E}[x_1(1)|\mathcal{F}_0(0)]\notag\\
&=\hat{x}_1(1|0).
\end{align}

Therefore, combining \eqref{est6} and \eqref{est9},  \eqref{est5} has been verified for $k=1$.

In order to use the induction method, we assume that \eqref{est5} holds for $\hat{x}_1(k|k), k=1,\cdots,l$, next we will prove that $\hat{x}_1(l+1|l+1)$ also satisfies \eqref{est5}.

In fact, from the system equation \eqref{sta1}, we have
\begin{align}\label{est10}
\hat{x}_1(l+1|l)&=A_{11}\hat{x}_1(l|l)+B_{11}\hat{u}_1(l)\notag\\
&+A_{10}x_0(l)+B_{10}u_0(l).
\end{align}

In the following, using \eqref{est1}, by following the lines of \eqref{est7}, $\hat{x}_1(l+1|l+1)$ is given by
\begin{align}\label{est11}
&\hat{x}_1(l+1|l+1)\notag\\
&=\mathbb{E}[x_1(l+1)|\mathcal{F}_0(l+1)]\notag\\
&=\mathbb{E}[x_1(l+1)|x_0(l+1),x_0(l)]\notag\\
&=\int_{\mathbb{R}^n} x_1(l+1) f \big(x_1(l+1)|x_0(l)\big)\ dx_1(l+1)\notag\\
&=\hat{x}_1(l+1|l).
\end{align}

Hence, combining \eqref{est10} and \eqref{est11}, we can conclude that \eqref{est5} can be proved for $k=l+1$, which completes the proof.
\end{proof}

\begin{remark}\label{rem3}
 In fact, according to Theorem \ref{th1}, using \eqref{sta1}, \eqref{sta2} and \eqref{est1}, for $k \geq 1,$ \eqref{est5} can be rewritten as the following compact form:
 \begin{equation}\label{est12}
\left\{ \begin{array}{ll}
\hat{X}(k|k)&=A\hat{X}(k-1|k-1)+B\hat{U}(k-1)\\
&+\mathbf{I}_0W(k-1),\\
\tilde{X}(k|k)&=A\tilde{X}(k-1|k-1)+B\tilde{U}(k-1)\\
&+\mathbf{I}_1W(k-1),
\end{array} \right.
\end{equation}
where $\hat{X}(k|k)= \left[\hspace{-1mm}
  \begin{array}{ccc}
    {x}_0(k)\\
     \hat{x}_1(k|k)\\
  \end{array}
\hspace{-1mm}\right]$ with the initial condition $\hat{X}(0|0)= \left[\hspace{-1mm}
  \begin{array}{ccc}
    \Bar{x}_0\\
     \Bar{x}_1\\
  \end{array}
\hspace{-1mm}\right]$, $\tilde{X}(k|k)=\left[\hspace{-1mm}
  \begin{array}{ccc}
    0\\
    \tilde{x}_1(k|k)\\
  \end{array}
\hspace{-1mm}\right]$, $\hat{U}(k)= \left[\hspace{-1mm}
  \begin{array}{ccc}
    u_0(k)\\
     \hat{u}_1(k)\\
  \end{array}
\hspace{-1mm}\right]$, $\tilde{U}(k)=U(k)-\hat{U}(k)$, $k=0,\cdots, N,$ $\mathbf{I}_0= \left[\hspace{-1mm}
  \begin{array}{ccc}
    I_n&0\\
    0&0\\
  \end{array}
\hspace{-1mm}\right]$, $\mathbf{I}_1= \left[\hspace{-1mm}
  \begin{array}{ccc}
    0&0\\
    0&I_n\\
  \end{array}
\hspace{-1mm}\right]$.
\end{remark}

\begin{remark}\label{rem4}
Compared with the estimators proposed in previous works \cite{sl2010,nkj2018}, which cannot be iteratived calculated, an iterative form of the optimal estimation for Problem \ref{prob1} is firstly derived in Theorem \ref{th1}, which is crucial for solving the FBSDEs \eqref{FBSDEs}. 
\end{remark}

\section{Finite-Horizon Optimal Decentralized Control}\label{ss4}

In this section, the optimal decentralized control strategy in finite-horizon will be derived via decoupling the FBSDEs \eqref{FBSDEs} by the use of orthogonal decomposition method.

Next, we will present the main conclusions of this section in the following theorem.
\begin{theorem}\label{th2}
Under Assumptions \ref{ass1} and \ref{ass2}, the optimal control strategy $\big(u_0(k),u_1(k)\big),k=0,\cdots,N$,of minimizing cost function \eqref{cost1} can be given by
\begin{align}\label{contr1}
u_0(k)&=-K_{00}(k)x_0(k)-K_{01}(k)\hat{x}_1(k|k),
\end{align}
\begin{align}\label{contr2}
\hspace{-9mm}u_1(k)&=-K_{10}(k)x_0(k)-K_{11}(k)x_1(k),
\end{align}
in which $K_{ij}(k), i,j=0,1$ is the block matrix of $K(k)$ with $K(k)=\left[\hspace{-1mm}
  \begin{array}{cccc}
    K_{00}(k)&K_{01}(k)\\
    K_{10}(k)&K_{11}(k)
  \end{array}
\hspace{-1mm}\right]$, satisfying
\begin{equation}\label{gain1}
\left\{ \begin{array}{ll}
K(k)=&\Lambda^{-1}(k)L(k),\\
L(k)=&B^{\top}P(k+1)A,\\
\Lambda(k)=&R+B^{\top}P(k+1)B,\\
P(k)=&Q+A^{\top}P(k+1)A\\
&-L^{\top}(k)\Lambda^{-1}(k)L(k),~~ P(N+1).
\end{array} \right.
\end{equation}

Furthermore, the relationship between the state $X(k)$ and the costate $\Theta(k)$ is given by
\begin{align}\label{costa2}
 \Theta(k-1)=P(k)X(k).
\end{align}
Meanwhile, the associated optimal cost function $J^{*}_N$ is calculated as:
\begin{align}\label{cost2}
\hspace{-2mm}J^{*}_N=\mathbb{E}[X^{\top}(0)P(0)X(0)]+\sum_{k=0}^{N}Tr\big(\Sigma_WP(k+1)\big),
\end{align}
with $\Sigma_W=[\Sigma_{w_0}^{\top}, \Sigma_{w_1}^{\top}]^{\top}$.
\end{theorem}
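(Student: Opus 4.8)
The plan is to use Lemma~\ref{lem1} and Remark~\ref{rem1} to recast Problem~\ref{prob1} as the problem of decoupling the FBSDEs~\eqref{FBSDEs}, and then to prove the costate representation~\eqref{costa2}, $\Theta(k-1)=P(k)X(k)$, by backward induction on $k$; the feedback gains~\eqref{gain1} and the control laws~\eqref{contr1}--\eqref{contr2} will be produced along the way, and the cost formula~\eqref{cost2} will follow by completing the square and telescoping. The base case is immediate from the terminal condition~\eqref{tc}: $\Theta(N)=P(N+1)X(N+1)$, which is~\eqref{costa2} at $k=N+1$.

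For the induction step I would assume $\Theta(k)=P(k+1)X(k+1)$, substitute $X(k+1)=AX(k)+BU(k)+W(k)$, and use $\mathbb{E}[W(k)\mid\mathcal{F}_1(k)]=0$ to turn the stationary condition~\eqref{sc1} into a linear relation among $X(k)$, $U(k)$ and their $\mathcal{F}_0(k)$-conditional expectations. This is exactly where the asymmetric information structure must be handled: the relation would be solved by the centralized feedback $U(k)=-\Lambda^{-1}(k)L(k)X(k)$, but that is inadmissible because $u_0(k)$ must be $\mathcal{F}_0(k)$-measurable. I would therefore apply the orthogonal decomposition: write each quantity as $Z=\hat Z+\tilde Z$ with $\hat Z=\mathbb{E}[Z\mid\mathcal{F}_0(k)]$, so that $\hat Z\perp\tilde Z$. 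Taking $\mathbb{E}[\,\cdot\mid\mathcal{F}_0(k)]$ of~\eqref{sc1} and invoking Theorem~\ref{th1} together with its compact form in Remark~\ref{rem3} (in particular $\mathbb{E}[X(k)\mid\mathcal{F}_0(k)]=\hat X(k|k)$ with $\hat x_0(k|k)=x_0(k)$, and $\mathbb{E}[W(k)\mid\mathcal{F}_0(k)]=0$) yields $\Lambda(k)\hat U(k)+L(k)\hat X(k|k)=0$, i.e.\ $\hat U(k)=-K(k)\hat X(k|k)$ with $K(k)=\Lambda^{-1}(k)L(k)$; its leader block is precisely~\eqref{contr1}. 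Subtracting this projected equation from~\eqref{sc1} and using $\tilde u_0(k)=0$, $\tilde X(k|k)=[0\;\;\tilde x_1^{\top}(k|k)]^{\top}$ leaves the relation that determines $\tilde u_1(k)$; adding back $\hat u_1(k)$ gives the follower law~\eqref{contr2}. It then remains to insert this $U(k)$ into the costate recursion~\eqref{costa1}: since $\mathbb{E}[X(k+1)\mid\mathcal{F}_1(k)]=AX(k)+BU(k)$, this produces $\Theta(k-1)=A^{\top}P(k+1)\big(AX(k)+BU(k)\big)+QX(k)$, and the control correction terms must combine with $L^{\top}(k)\Lambda^{-1}(k)L(k)$ so that the right side collapses to $P(k)X(k)$ with $P(k)$ as in~\eqref{gain1}. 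The conditional independence of Lemma~\ref{lem2} is the indispensable tool here, since it forces the second-moment cross terms (those in $x_0x_1^{\top}$) to factor through the one-step predictors and cancel. This closes the induction and establishes~\eqref{costa2}.

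For existence and uniqueness, Assumption~\ref{ass2} propagates backward through~\eqref{gain1} to give $P(k)\ge 0$ for all $k$, hence $\Lambda(k)=R+B^{\top}P(k+1)B>0$, so the stationary condition~\eqref{sc1} is uniquely solvable and Lemma~\ref{lem1} yields the unique optimal strategy. For~\eqref{cost2} I would use the identity $\mathbb{E}[X^{\top}(k+1)P(k+1)X(k+1)]=\mathbb{E}\big[(AX(k)+BU(k))^{\top}P(k+1)(AX(k)+BU(k))\big]+\Tr\big(\Sigma_W P(k+1)\big)$, complete the square in $U(k)$ about the optimal feedback so that the cross terms vanish by optimality, and telescope from $k=0$ to $k=N$.

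The step I expect to be hardest is the inductive verification that the closed-loop costate keeps the form $P(k)X(k)$ once the control correction has been substituted. Because the two information sets are nested but unequal, the conditional expectations with respect to $\mathcal{F}_1(k)$ and $\mathcal{F}_0(k)$ have to be tracked simultaneously, and the computation only closes thanks to Lemma~\ref{lem2} (which makes the relevant second moments factor through the predictors) and Theorem~\ref{th1} (which makes those predictors iterative); aligning the block algebra of $K(k)$ with~\eqref{contr1}--\eqref{contr2} is where the detailed bookkeeping is required.
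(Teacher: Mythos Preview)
Your plan is correct and is essentially the paper's proof: backward induction on the costate representation $\Theta(k-1)=P(k)X(k)$, orthogonal decomposition of the stationary condition into its $\mathcal{F}_0(k)$-projection and residual to extract $\hat U(k)=-K(k)\hat X(k|k)$ and $\tilde U(k)=-K(k)\tilde X(k|k)$, and a telescoping argument for $J_N^*$ (the paper telescopes $\mathbb{E}[X^\top(k)\Theta(k-1)-X^\top(k+1)\Theta(k)]$ via the costate rather than completing the square, but the two are equivalent).

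One clarification on what you flag as the ``hardest step'': it is in fact the easiest. Because the \emph{same} gain $K(k)$ solves both projected equations, you get $U(k)=\hat U(k)+\tilde U(k)=-K(k)\big(\hat X(k|k)+\tilde X(k|k)\big)=-K(k)X(k)$, and substituting this into $\Theta(k-1)=A^\top P(k+1)\big(AX(k)+BU(k)\big)+QX(k)$ collapses immediately to $P(k)X(k)$ by first-order algebra; no second moments or $x_0x_1^\top$ cross terms ever appear. Lemma~\ref{lem2} is consumed entirely in the proof of Theorem~\ref{th1} and is not invoked again in Theorem~\ref{th2}.
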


\begin{proof}
According to Assumption \ref{ass1}, in order to ensure the adaptability of the control inputs $u_0(k)$ and $u_1(k)$, we rewrite the stationary condition \eqref{sc1} as:
\begin{align}
 0&=R\hat{U}(k)+\mathbb{E}[B^{\top}\Theta(k)|\mathcal{F}_0(k)],\label{sc2}\\
 0&=R\tilde{U}(k)+\mathbb{E}[B^{\top}\Theta(k)|\mathcal{F}_1(k)]\notag\\
 &-\mathbb{E}[B^{\top}\Theta(k)|\mathcal{F}_0(k)]. \label{sc3}
\end{align}

The backward induction method will be used to derive the optimal control strategy. Firstly, with $k=N$, using the system dynamics \eqref{sta2} and the terminal condition \eqref{tc}, then it can be implied from \eqref{sc2} that
\begin{align*}
0&=R\hat{U}(N)+\mathbb{E}[B^{\top}\Theta(N)|\mathcal{F}_0(N)]\notag\\
&=R\hat{U}(N)+\mathbb{E}[B^{\top}P(N+1)X(N+1)|\mathcal{F}_0(N)]\notag\\
&=R\hat{U}(N)+B^{\top}P(N+1)A\hat{X}(N|N)\notag\\
&+B^{\top}P(N+1)B\hat{U}(N)\notag\\
&=\Lambda(N)\hat{U}(N)+L(N)\hat{X}(N|N).
\end{align*}

Noting that, from Assumption \ref{ass2}, it can be concluded that $\Lambda(N)$ is positive definite, then we have
\begin{align}\label{contr3}
\hat{U}(N)&=-\Lambda^{-1}(N)L(N)\hat{X}(N|N).
\end{align}

Similarly, it can be inferred from \eqref{sc3} that
\begin{align*}
 0=&R\tilde{U}(N)+\mathbb{E}[B^{\top}\Theta(N)|\mathcal{F}_1(N)]\notag\\
 &-\mathbb{E}[B^{\top}\Theta(N)|\mathcal{F}_0(N)]\notag\\
 =&R\tilde{U}(N)+\mathbb{E}[B^{\top}P(N+1)X(N+1)|\mathcal{F}_1(N)]\notag\\
 &-\mathbb{E}[B^{\top}P(N+1)X(N+1)|\mathcal{F}_0(N)] \notag\\
 =&R\tilde{U}(N)+B^{\top}P(N+1)A\tilde{X}(N|N)\notag\\
 &+B^{\top}P(N+1)B\tilde{U}(N),
\end{align*}
which indicates that
\begin{align}\label{contr4}
\tilde{U}(N)&=-\Lambda^{-1}(N)L(N)\tilde{X}(N|N).
\end{align}

From the notations of $\hat{U}(N), \tilde{U}(N)$, we can conclude from \eqref{contr3}-\eqref{contr4} that the optimal control pair $(u_0(N), u_1(N))$ is given by \eqref{contr1}-\eqref{contr2} with $k=N$.

In the following, we will calculate $\Theta(N-1)$. Actually, using \eqref{sta2}, the terminal condition \eqref{tc}, \eqref{contr3} and \eqref{contr4}, it follows from the backward difference equation \eqref{costa1} with $k=N$ that
\begin{align*}
&\Theta(N-1)\notag\\
&=\mathbb{E}[A^{\top}\Theta(N)|\mathcal{F}_1(N)]+QX(N)\notag\\
&=\mathbb{E}\big[A^{\top}P(N+1)[AX(N)+BU(N)+W(N)]|\mathcal{F}_1(N)\big]\notag\\
&+QX(N)\notag\\
&=A^{\top}P(N+1)AX(N)+A^{\top}P(N+1)B[\hat{U}(N)\notag\\
&+\tilde{U}(N)]+QX(N)\notag\\
&=A^{\top}P(N+1)AX(N)-L^{\top}(N)\Lambda^{-1}(N)L(N)[\hat{X}(N)
\notag\\
&+\tilde{X}(N)]+QX(N)\notag\\
&=[Q+A^{\top}P(N+1)A-L^{\top}(N)\Lambda^{-1}(N)L(N)]X(N)\notag\\
&=P(N)X(N).
\end{align*}
In the above, \eqref{contr3}-\eqref{contr4} have been inserted. Then, \eqref{costa2} can be proved for $k=N$.

Consequently, to use backward induction, we assume that for arbitrary $l$, and for $k=l+1,\cdots,N$, there holds:
\begin{itemize}
  \item [1)] The optimal control pair $\big(u_0(k), u_1(k)\big)$ is given by \eqref{contr1}-\eqref{contr2};
  \item [2)] The costate $\Theta(k-1)$ satisfies \eqref{costa2}.
\end{itemize}

Then, for $k=l+1$, using \eqref{est12}, we can know that
\begin{align}\label{costa3}
\Theta(l)&=P(l+1)X(l+1)\notag\\
&=P(l+1)\hat{X}(l+1|l+1)+P(l+1)\tilde{X}(l+1|l+1)\notag\\
&=P(l+1)[A\hat{X}(l|l)+B\hat{U}(l)+I_0W(l)]\notag\\
&+P(l+1)[A\tilde{X}(l|l)+B\tilde{U}(l)+I_1W(l)].
\end{align}

Next, for $k=l$, using the stationary condition \eqref{sc2} and \eqref{costa3}, it can be obtained that
\begin{align*}
0&=R\hat{U}(l)+\mathbb{E}[B^{\top}\Theta(l)|\mathcal{F}_0(l)]\notag\\
&=R\hat{U}(l)+\mathbb{E}\big[B^{\top}P(l+1)[A\hat{X}(l|l)+B\hat{U}(l)+I_0W(l)]\notag\\
&+B^{\top}P(l+1)[A\tilde{X}(l|l)+B\tilde{U}(l)+I_1W(l)]|\mathcal{F}_0(l)\big]\notag\\
&=R\hat{U}(l)+B^{\top}P(l+1)A\hat{X}(l|l)+B^{\top}P(l+1)B\hat{U}(l)\notag\\
&=\Lambda(l)\hat{U}(l)+L(l)\hat{X}(l|l).
\end{align*}

According to \textit{Lemma 6} of \cite{qxzl2025}, it can be deduced from Assumption \ref{ass2} that $P(l)\geq 0$, hence $\Lambda(l)>0$ can be derived. In this case, $\hat{U}(l)$ can be calculated as follows,
\begin{align}\label{contr5}
\hat{U}(l)&=-\Lambda^{-1}(l)L(l)\hat{X}(l|l)\notag\\
&=-K(l)\hat{X}(l|l).
\end{align}

Similarly, it can be derived from \eqref{costa3} that
 \begin{align*}
 0&=R\tilde{U}(l)+\mathbb{E}[B^{\top}\Theta(l)|\mathcal{F}_1(l)]-\mathbb{E}[B^{\top}\Theta(l)|\mathcal{F}_0(l)]\notag\\
  &=R\tilde{U}(l)+\mathbb{E}\big[B^{\top}P(l+1)[A\hat{X}(l|l)+B\hat{U}(l)+I_0W(l)]\notag\\
  &+B^{\top}P(l+1)[A\tilde{X}(l|l)+B\tilde{U}(l)+I_1W(l)]|\mathcal{F}_1(l)\big]\notag\\
  &-\mathbb{E}\big[B^{\top}P(l+1)[A\hat{X}(l|l)+B\hat{U}(l)+I_0W(l)]\notag\\
  &+B^{\top}P(l+1)[A\tilde{X}(l|l)+B\tilde{U}(l)+I_1W(l)]|\mathcal{F}_0(l)\big]\notag\\
  &=R\tilde{U}(l)+B^{\top}P(l+1)A\hat{X}(l|l)+B^{\top}P(l+1)B\hat{U}(l) \notag\\
  &+B^{\top}P(l+1)A\tilde{X}(l|l)+B^{\top}P(l+1)B\tilde{U}(l)\notag\\
  &-B^{\top}P(l+1)A\hat{X}(l|l)-B^{\top}P(l+1)B\hat{U}(l)\notag\\
  &=R\tilde{U}(l)+B^{\top}P(l+1)A\tilde{X}(l|l)+B^{\top}P(l+1)B\tilde{U}(l)\notag\\
  &=\Lambda(l)\tilde{U}(l)+L(l)\tilde{X}(l|l).
 \end{align*}

As discussed above, $\Lambda(l)>0$, then we have 
\begin{align}\label{contr6}
\tilde{U}(l)&=-\Lambda^{-1}(l)L(l)\hat{X}(l|l)\notag\\
&=-K(l)\tilde{X}(l|l).
\end{align}

Based on \eqref{contr5}-\eqref{contr6}, we can conclude that \eqref{contr1} and \eqref{contr2} have been proven for $k=l$.

Next, we will calculate the costate $\Theta(l-1)$. In fact, for $k=l$, by inserting \eqref{costa3} and \eqref{contr5}-\eqref{contr6} into \eqref{costa1}, we can obtain that
\begin{align*}
\Theta(l-1)=&\mathbb{E}[A^{\top}\Theta(l)|\mathcal{F}_1(l)]+QX(l)\notag\\
=&A^{\top}P(l+1)AX(l)+A^{\top}P(l+1)BU(l)\notag\\
&+QX(l)\notag\\
=&[Q+A^{\top}P(l+1)A-L^{\top}(l)\Lambda^{-1}(l)L(l)]X(l)\notag\\
=&P(l)X(l).
\end{align*}
In other words, \eqref{costa2} has been verified for $k=l$.

To end the proof, we will give the method of calculating the optimal cost function $J_N^*$ in \eqref{cost2}.

In fact, by using the LFNS \eqref{sta2}, the stationary condition \eqref{sc1}, the costate \eqref{costa1} and \eqref{costa2}, we have
\begin{align}\label{sum1}
&\mathbb{E}[X^{\top}(k)\Theta(k-1)-X^{\top}(k+1)\Theta(k)]\notag\\
&=\mathbb{E}\big[X^{\top}(k)\{\mathbb{E}[A^{\top}\Theta(k)|\mathcal{F}_1(k)]+QX(k)\}\notag\\
&-[AX(k)+BU(k)+W(k)]^{\top}\Theta(k)\big]\notag\\
&=\mathbb{E}[X^{\top}(k)QX(k)-U^{\top}(k)B^{\top}\Theta(k)]\notag\\
&-\mathbb{E}[W^{\top}(k)P(k+1)X(k+1)]\notag\\
&=\mathbb{E}\big[X^{\top}(k)QX(k)-U^{\top}(k)\mathbb{E}[B^{\top}\Theta(k)|\mathcal{F}_1(k)]\big]\notag\\
&-Tr\big(\Sigma_WP(k+1)\big)\notag\\
&=\mathbb{E}[X^{\top}(k)QX(k)+\hat{U}^{\top}(k)R\hat{U}(k)\notag\\
&+\tilde{U}^{\top}(k)R\tilde{U}(k)]\-Tr\big(\Sigma_WP(k+1)\big).
\end{align}

Taking summation on both sides of \eqref{sum1} from $k=0$ to $k=N$, it yields that
\begin{align*}
&\mathbb{E}[X^{\top}(0)\Theta(-1)-X^{\top}(N+1)\Theta(N)]\notag\\
&=\sum_{k=0}^{N}\mathbb{E}[X^{\top}(k)QX(k)+\hat{U}^{\top}(k)R\hat{U}(k)\notag\\
&+\tilde{U}^{\top}(k)R\tilde{U}(k)]-\sum_{k=0}^{N}Tr\big(\Sigma_WP(k+1)\big).
\end{align*}

Using \eqref{costa2} and the terminal condition \eqref{tc}, we obtain that
\begin{align}\label{sum2}
&\sum_{k=0}^{N}\mathbb{E}[X^{\top}(k)QX(k)+U^{\top}(k)RU(k)]\notag\\
&+\mathbb{E}[X^{\top}(N+1)P(N+1)X(N+1)]\notag\\
&=\mathbb{E}[X^{\top}(0)P(0)X(0)]+\sum_{k=0}^{N}Tr\big(\Sigma_WP(k+1)\big).
\end{align}

Therefore, with the optimal control strategy \eqref{contr1}-\eqref{contr2}, the optimal cost function $J_N^*$ can be calculated from \eqref{sum2} as \eqref{cost2}, which ends the proof.
\end{proof}

\begin{remark}
 It can be observed from Theorem \ref{th2} and its proof that the key to decoupling and solving the FBSDEs \eqref{FBSDEs} lies in utilizing the orthogonality between $\hat{u}_1(k)$ and $\tilde{u}_1(k)$, which we refer to as the ``orthogonal decomposition method". Corresponding to this orthogonal decomposition approach, the form of the controller in the optimal control strategy \eqref{contr1} (i.e., \eqref{contr5}, \eqref{contr6}) is also divided into two parts, with $\hat{x}_1(k|k)$, $\tilde{x}_1(k|k)$ being orthogonal. Furthermore, compared to previous results, this paper does not give a predefined form for the optimal control strategy. Instead, the optimal control strategy is directly derived by solving the FBSDEs \eqref{FBSDEs}.
\end{remark}

\section{Infinite-Horizon Optimal Decentralized Feedback Stabilization Control}\label{ss5}

\subsection{Problem Formulation}
In this section, we will solve the optimal decentralized feedback stabilization control problem of infinite-horizon. For this purpose, we introduce the following infinite-horizon cost function:
\begin{align}\label{cost3}
\mathcal{J}=\mathbb{E}\bigg[\sum_{k=0}^{\infty}\gamma^k[X^{\top}(k)QX(k)+ U^{\top}(k)RU(k)]\bigg],
\end{align}
where $0<\gamma<1$ is a given discount factor.

Firstly, we introduce the following definitions.

\begin{definition}\label{def1}
The LFNS \eqref{sta1} with $(u_0(k),u_1(k))=(0,0)$ is called mean square stable (MSS) if there exists a constant $c>0$ independent of the initial states $x_0(0),x_1(0)$ such that $\lim_{k\to\infty}\mathbb{E}x_i(k)=0, i=0,1 $ and $\lim_{k\to\infty}\mathbb{E}x_i^{\top}(k)x_i(k)=c$.
\end{definition}

\begin{definition}\label{def2}
Under Assumption \ref{ass1}, the LFNS \eqref{sta1} is called feedback stabilizable if there exists $\mathcal{F}_0(k)-$ adapted $u_0(k)$ and $\mathcal{F}_1(k)-$ adapted $u_1(k)$ such that the closed-loop LFNS \eqref{sta1} is MSS.
\end{definition}

Without loss of generality, the following two assumptions are made, \cite{iyb2006}.
\begin{assumption}\label{ass3}
$R>0$ and $Q = C^{\top}C\geq0$ for some matrix $C$.
\end{assumption}

\begin{assumption}\label{ass4}
$(A, Q^{1/2})$ is observable.
\end{assumption}

Next, the decentralized feedback stabilization control problem to be addressed in this section is stated below:

\begin{problem}\label{prob2}
Suppose Assumptions \ref{ass1}, \ref{ass3}-\ref{ass4} hold, find the necessary and sufficient conditions to make the LFNS \eqref{sta1} feedback stabilizable. Meanwhile, minimize \eqref{cost3} subject to \eqref{sta2}.
\end{problem}

\subsection{Main Results}

For the sake of discussion, we will first introduce the cost function in finite-horizon as follows:
 \begin{align}\label{cost4}
\mathcal{J}_N=\mathbb{E}\bigg[\sum_{k=0}^{N}\gamma^k[X^{\top}(k)QX(k)+U^{\top}(k)RU(k)]\bigg],
\end{align}
in which the coefficients $\gamma, Q, R$ are the same as those in \eqref{cost3}.

By using the results of Theorem \ref{th2}, we shall give the following lemma without proof.

\begin{lemma}\label{lem4}
Under Assumptions \ref{ass1} and \ref{ass3}, for $k=0,\cdots,N$ the optimal control pair $\big(u_0(k), u_1(k)\big)$ of minimizing \eqref{cost4} is given as:
\begin{align}\label{contr7}
u_0(k)&=-H_{N00}(k)x_0(k)-H_{N01}(k)\hat{x}_1(k|k),
\end{align}
\begin{align}\label{contr8}
\hspace{-3mm}u_1(k)&=-H_{N10}(k)x_0(k)-H_{N11}(k)x_1(k),
\end{align}
in which the gain matrices $H_{Nij}(k), i,j=0,1$ is the block matrix of
$H_N(k)=\left[\hspace{-1mm}
  \begin{array}{cccc}
    H_{N00}(k)&H_{N01}(k)\\
    H_{N10}(k)&H_{N11}(k)
  \end{array}
\hspace{-1mm}\right]$, satisfying
\begin{equation}\label{gain2}
\left\{ \begin{array}{ll}
H_N(k)&=\gamma \Psi_N^{-1}(k)L_N(k),\\
L_N(k)&=B^{\top}\mathcal{P}_N(k+1)A,\\
\Psi_N(k)&=R+\gamma B^{\top}\mathcal{P}_N(k+1)B,
\end{array} \right.
\end{equation}
and $\mathcal{P}_N(k)$ satisfies the following Riccati equation:
\begin{align}\label{ricc1}
  \mathcal{P}_N(k)&=Q+\gamma A^{\top}\mathcal{P}_N(k+1)A\notag\\
&-\gamma^2L_N^{\top}(k)\Psi^{-1}_N(k)L_N(k), \mathcal{P}_N(N+1)=0,
\end{align}
where $\hat{x}_1(k|k)$ can be calculated from \eqref{est5}.

Moreover, the cost function \eqref{cost4} is minimized as:
{\small\begin{align}\label{cost5}
  \mathcal{J}_N^* & =\mathbb{E}[X^{\top}(0)\mathcal{P}_N(0)X(0)]+\sum_{k=0}^{N}\gamma^{k+1}Tr\big(\Sigma_W\mathcal{P}_N(k+1)\big),
\end{align}}
where $\Sigma_W=[\Sigma_{w_0}^{\top}, \Sigma_{w_1}^{\top}]^{\top}$.
\end{lemma}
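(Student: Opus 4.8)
The plan is to deduce Lemma \ref{lem4} directly from the finite-horizon result of Theorem \ref{th2}, by absorbing the discount factor through a deterministic time-weighting, rather than repeating the backward induction. Concretely, I would introduce the scaled variables $X_\gamma(k):=\gamma^{k/2}X(k)$, $U_\gamma(k):=\gamma^{k/2}U(k)$ and $W_\gamma(k):=\gamma^{(k+1)/2}W(k)$; multiplying the compact dynamics \eqref{sta2} by $\gamma^{(k+1)/2}$ gives
\begin{equation*}
X_\gamma(k+1)=\sqrt{\gamma}\,A\,X_\gamma(k)+\sqrt{\gamma}\,B\,U_\gamma(k)+W_\gamma(k),
\end{equation*}
which is again a leader--follower system of the form \eqref{sta2}, now with system matrices $(\sqrt{\gamma}A,\sqrt{\gamma}B)$ and step-wise noise covariance $\gamma^{k+1}\Sigma_W$, while the discounted cost \eqref{cost4} turns into the ordinary quadratic cost $\mathbb{E}\sum_{k=0}^{N}[X_\gamma^{\top}(k)QX_\gamma(k)+U_\gamma^{\top}(k)RU_\gamma(k)]$ with zero terminal penalty.

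Next I would verify that the transformed problem satisfies the hypotheses of Theorem \ref{th2}. Since $\gamma^{k/2}>0$ is deterministic, the scaling changes no $\sigma$-algebra, so the information sets $\mathcal{S}_0(k),\mathcal{S}_1(k)$ and the $\mathcal{F}_i(k)$-measurability constraint of Assumption \ref{ass1} are preserved; the conditional independence of Lemma \ref{lem2} and the iterative estimator of Theorem \ref{th1} carry over verbatim to $X_\gamma$ (with $A,B$ replaced by $\sqrt{\gamma}A,\sqrt{\gamma}B$), because the noise vectors remain mutually independent Gaussians. Finally, Assumption \ref{ass3} provides $Q\ge 0$ and $R>0$, and the zero terminal weight is trivially positive semi-definite, so Assumption \ref{ass2} holds for the transformed data and Theorem \ref{th2} applies.

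Applying Theorem \ref{th2} to the transformed system, I would read off its gain and Riccati recursions \eqref{gain1} evaluated at $(\sqrt{\gamma}A,\sqrt{\gamma}B)$ and set $\mathcal{P}_N(k):=P_\gamma(k)$. This yields $\Psi_N(k)=R+\gamma B^{\top}\mathcal{P}_N(k+1)B$, $L_\gamma(k)=\gamma B^{\top}\mathcal{P}_N(k+1)A=\gamma L_N(k)$, $K_\gamma(k)=\gamma\Psi_N^{-1}(k)L_N(k)=H_N(k)$, and the Riccati recursion \eqref{ricc1} with $\mathcal{P}_N(N+1)=0$; Theorem \ref{th2} then gives the optimal pair in the block form \eqref{contr1}--\eqref{contr2} with $K_\gamma$ in place of $K$. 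Because conditional expectation is linear and $\gamma^{k/2}$ is deterministic, $\hat{X}_\gamma(k|k)=\gamma^{k/2}\hat{X}(k|k)$ while $x_{0,\gamma}(k)=\gamma^{k/2}x_0(k)$ and $x_{1,\gamma}(k)=\gamma^{k/2}x_1(k)$, so dividing the control law through by $\gamma^{k/2}$ recovers exactly \eqref{contr7}--\eqref{contr8} with $\hat{x}_1(k|k)$ propagated by \eqref{est5}. Similarly, the optimal-cost identity \eqref{cost2}, with $X_\gamma(0)=X(0)$, initial-time Riccati value $\mathcal{P}_N(0)$, and step noise covariance $\gamma^{k+1}\Sigma_W$, reproduces \eqref{cost5}.

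The one point I expect to require care is that Theorem \ref{th2} is stated for a time-invariant noise covariance, whereas the transformed noise $W_\gamma(k)$ has the $k$-dependent covariance $\gamma^{k+1}\Sigma_W$. I would handle this by observing that the proof of Theorem \ref{th2} uses the noise statistics only through the trace terms $\mathrm{Tr}(\Sigma_W P(k+1))$ appearing in the completion-of-squares identity \eqref{sum1}, so it stays valid with $\Sigma_W$ replaced step-by-step by $\gamma^{k+1}\Sigma_W$; alternatively, one can bypass the reduction and re-run the backward induction of Theorem \ref{th2} directly on \eqref{cost4}, carrying the factor $\gamma^{k}$ through the stationarity conditions \eqref{sc2}--\eqref{sc3} and the costate recursion \eqref{costa1}. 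Either route delivers \eqref{gain2}, \eqref{ricc1} and \eqref{cost5}, which completes the argument.
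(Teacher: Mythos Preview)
Your proposal is correct. The paper itself omits the proof entirely, stating only that the lemma follows ``by using the results of Theorem~\ref{th2}''; your $\gamma^{k/2}$-scaling is the standard and clean way to make that reduction explicit, and your computations for $\Psi_N$, $L_N$, $H_N$, the Riccati recursion \eqref{ricc1}, and the optimal cost \eqref{cost5} all check out. The point you flag about the time-varying noise covariance $\gamma^{k+1}\Sigma_W$ is real but harmless for exactly the reason you give: in the derivation of Theorem~\ref{th2} the noise enters only in the trace term of \eqref{sum1}--\eqref{sum2}, and nowhere in the stationarity or costate relations, so replacing $\Sigma_W$ by a deterministic $k$-dependent covariance changes nothing except the additive constants in the optimal cost.
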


Based on the above conclusions, we will present the following main result.
\begin{theorem}\label{th3}
Under Assumptions \ref{ass1}, \ref{ass3} and \ref{ass4}, the LFNS \eqref{sta1} can be feedback stabilizable by the following control strategy:
\begin{equation}\label{contr9}
\left\{ \begin{array}{ll}
u_0(k)=&-H_{00}x_0(k)-H_{01}\hat{x}_1(k|k),\\
u_1(k)=&-H_{10}x_0(k)-H_{11}x_1(k),
\end{array} \right.
\end{equation}
where $\hat{x}_1(k|k)$ can be calculated from \eqref{est5}, if and only if the following two conditions hold:
\begin{itemize}
  \item [1)]  There exists a unique
solution $\mathcal{P}>0$ satisfying the following  Riccati equation:
\begin{align}\label{ricc2}
 \mathcal{P}=Q+\gamma A^{\top}\mathcal{P}A-\gamma^2L^{\top}\Psi^{-1}L,
\end{align}
  \item [2)] \begin{align}\label{ricc3}
(1-\gamma)\mathcal{P}< Q+H^{\top}RH,
\end{align}
where 
\begin{equation}\label{gain3}
\left\{ \begin{array}{ll}
H&=\left[\hspace{-1mm}
  \begin{array}{cccc}
    H_{00}&H_{01}\\
    H_{10}&H_{11}
  \end{array}
\hspace{-1mm}\right],\\
H&=\gamma \Psi^{-1}L,\\
L&=B^{\top}\mathcal{P}A,\\
\Psi&=R+\gamma B^{\top}\mathcal{P}B,\\
\mathcal{P}&=Q+\gamma A^{\top}\mathcal{P}A-\gamma^2L^{\top}\Psi^{-1}L.
\end{array} \right.
\end{equation}
\end{itemize}
Meanwhile, the cost function in infinite-horizon \eqref{cost3} is minimized by \eqref{contr9} as follows:
\begin{align}\label{cost6}
\mathcal{J}^{*}=\mathbb{E}[X^{\top}(0)\mathcal{P}X(0)]+\frac{\gamma}{1-\gamma}Tr(\Sigma_W\mathcal{P}),
\end{align}
where $\Sigma_W=[\Sigma_{w_0}^{\top}, \Sigma_{w_1}^{\top}]^{\top}$.
\end{theorem}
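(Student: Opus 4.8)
The plan is to obtain the infinite-horizon result as the limit of the finite-horizon optimal control established in Lemma \ref{lem4}, using the standard monotonicity-plus-stabilizability machinery for discounted LQ problems. First I would study the sequence $\{\mathcal{P}_N(0)\}_N$ (equivalently $\{\mathcal{P}_N(k)\}$ run backward from the terminal condition $\mathcal{P}_N(N+1)=0$). Because $Q=C^\top C\ge 0$, the map defined by the Riccati recursion \eqref{ricc1} is monotone in the terminal penalty, so $\mathcal{P}_N(0)$ is nondecreasing in $N$; under the stabilizability hypothesis implicit in Problem \ref{prob2} the cost $\mathcal{J}_N^*$ is uniformly bounded along any stabilizing policy (e.g. the one generated by a fixed $\mathcal{P}$), hence $\mathcal{P}_N(0)$ is bounded above. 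A monotone bounded sequence of symmetric matrices converges, so $\mathcal{P}_N(0)\to\mathcal{P}$ for some $\mathcal{P}\ge 0$; passing to the limit in \eqref{ricc1}, \eqref{gain2} shows $\mathcal{P}$ solves the algebraic Riccati equation \eqref{ricc2} and that the finite-horizon gains $H_N(0)$ converge to $H=\gamma\Psi^{-1}L$ in \eqref{gain3}. Observability of $(A,Q^{1/2})$ (Assumption \ref{ass4}) then upgrades $\mathcal{P}\ge 0$ to $\mathcal{P}>0$ and is also what forces uniqueness of the stabilizing solution, giving condition 1).

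Next I would verify that the stationary feedback \eqref{contr9} is in fact stabilizing. The closed-loop noise-free dynamics, after substituting \eqref{contr9} and using the estimator \eqref{est5} from Theorem \ref{th1}, decompose along the orthogonal split $X=\hat X+\tilde X$ of Remark \ref{rem3}: both $\hat X(k|k)$ and $\tilde X(k|k)$ evolve under the same closed-loop matrix $A-BH$. Mean-square stability in the sense of Definition \ref{def1} is therefore equivalent to $\rho(\sqrt{\gamma}(A-BH))<1$ together with boundedness of the stationary second moment; here the extra ingredient over the classical case is exactly inequality \eqref{ricc3}. The idea is to use $V(X)=X^\top\mathcal{P}X$ as a Lyapunov function: from \eqref{ricc2} and \eqref{gain3} one gets the discrete dissipation identity $(A-BH)^\top\mathcal{P}(A-BH)=\tfrac{1}{\gamma}\bigl(\mathcal{P}-Q-H^\top R H\bigr)$, and \eqref{ricc3} is precisely the statement that $\tfrac1\gamma(\mathcal{P}-Q-H^\top RH)\prec\mathcal{P}$, i.e. $(A-BH)^\top\mathcal{P}(A-BH)\prec\mathcal{P}$, which makes $V$ a strict Lyapunov function and yields $\rho(A-BH)<1$. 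Feeding the Gaussian noise back in, the stationary covariance solves a Stein equation whose solution is finite and gives the constant $c$ in Definition \ref{def1}; this proves the ``if'' direction and Definition \ref{def2} feedback stabilizability.

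For the ``only if'' direction I would argue contrapositively: if the LFNS is feedback stabilizable, then along any stabilizing adapted policy the truncated costs $\mathcal{J}_N^*\le\mathcal{J}_N(\text{that policy})$ stay bounded as $N\to\infty$, which is what was needed above to get convergence of $\mathcal{P}_N(0)$ and hence existence of $\mathcal{P}>0$ solving \eqref{ricc2}; uniqueness again comes from Assumption \ref{ass4}. To get the strict inequality \eqref{ricc3} as a genuine necessary condition one shows that if it failed (i.e. $(A-BH)^\top\mathcal{P}(A-BH)\succeq\mathcal{P}$ with $\mathcal{P}>0$) then $\rho(A-BH)\ge1$, so the optimal feedback \eqref{contr9} would not be MSS, and a comparison/optimality argument shows no other adapted feedback can do better asymptotically — contradicting stabilizability. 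Finally, for the optimal cost \eqref{cost6}, I would take $N\to\infty$ in \eqref{cost5}: the first term converges to $\mathbb{E}[X^\top(0)\mathcal{P}X(0)]$, and the trace sum becomes the geometric series $\sum_{k=0}^\infty\gamma^{k+1}\mathrm{Tr}(\Sigma_W\mathcal{P})=\tfrac{\gamma}{1-\gamma}\mathrm{Tr}(\Sigma_W\mathcal{P})$, using $0<\gamma<1$ and $\mathcal{P}_N(k+1)\to\mathcal{P}$ with a dominated-convergence / uniform-bound justification.

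The main obstacle I anticipate is making the necessity of \eqref{ricc3} airtight: monotone convergence of $\mathcal{P}_N(0)$ and existence of a stabilizing $\mathcal{P}$ are standard, but showing that \eqref{ricc3} is \emph{equivalent} to mean-square stability of the closed loop — rather than merely sufficient — requires carefully connecting the spectral radius condition $\rho(A-BH)<1$ to the Lyapunov inequality induced by \eqref{ricc3}, and ruling out that some non-stationary or non-certainty-equivalence adapted policy could stabilize the LFNS when the stationary one cannot. Handling the asymmetric information sets $\mathcal{F}_0(k)\subsetneq\mathcal{F}_1(k)$ here is delicate: one must invoke the estimator recursion of Theorem \ref{th1} and the orthogonal decomposition of Theorem \ref{th2} to reduce any such policy's closed loop to the same $A-BH$ dynamics, so that optimality of the stationary feedback among all admissible adapted feedbacks is what closes the argument.
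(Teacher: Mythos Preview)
Your overall strategy---monotone bounded convergence of $\mathcal{P}_N(0)$ to $\mathcal{P}$, observability for $\mathcal{P}>0$ and uniqueness, the Lyapunov identity $(A-BH)^\top\mathcal{P}(A-BH)=\gamma^{-1}(\mathcal{P}-Q-H^\top RH)$ to link \eqref{ricc3} with $\rho(A-BH)<1$, and then reading off the cost---matches the paper's proof essentially step for step. Two minor differences: the paper computes \eqref{cost6} not by passing to the limit in \eqref{cost5} but by a direct telescoping argument with the time-varying Lyapunov function $V(k)=\gamma^k\mathbb{E}[X^\top(k)\mathcal{P}X(k)]+\sum_{i\ge k}\gamma^{i+1}\mathrm{Tr}(\Sigma_W\mathcal{P})$, which sidesteps the dominated-convergence justification you flagged; and for necessity of \eqref{ricc3} the paper works forwards rather than contrapositively, first arguing that the optimal gain $-H$ itself satisfies $\rho(A-BH)<1$ (since it arises as the limit of stabilizing finite-horizon gains) and then reading \eqref{ricc3} off the identity \eqref{lyp1}.

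Two slips to fix in your write-up. First, the negation of the strict matrix inequality $(A-BH)^\top\mathcal{P}(A-BH)\prec\mathcal{P}$ is \emph{not} $(A-BH)^\top\mathcal{P}(A-BH)\succeq\mathcal{P}$; it only says the difference fails to be positive definite in some direction, and that alone does not force $\rho(A-BH)\ge 1$ (consider a nilpotent $A-BH$ with a poorly scaled $\mathcal{P}$). So your contrapositive route for necessity of \eqref{ricc3} does not close as written---this is precisely the obstacle you anticipated, and the paper's forward argument avoids it. Second, the sentence ``mean-square stability\ldots is equivalent to $\rho(\sqrt{\gamma}(A-BH))<1$'' conflates finiteness of the $\gamma$-discounted cost with MSS in the sense of Definition~\ref{def1}; stabilizability here genuinely requires $\rho(A-BH)<1$, which is exactly what the strict Lyapunov inequality you derive from \eqref{ricc3} delivers.
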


\begin{proof}
`Necessity': Under Assumptions \ref{ass1} and \ref{ass3}-\ref{ass4}, suppose the LFNS \eqref{sta1} can
be feedback stabilizable, we will show that there exists a unique solution $\mathcal{P}$ to Riccati equation \eqref{ricc2}, such that $\mathcal{P}>0$ and \eqref{ricc3} holds.

In the first place, we will prove that $\mathcal{P}_N(k)$ in \eqref{ricc1} is convergent with $k\rightarrow+\infty$.

In fact, according to the cost function \eqref{cost4}, clearly, the following monotonicity result holds:
\begin{align*}
\mathcal{J}^{*}_N\leq \mathcal{J}^{*}_{N+1}.
\end{align*}
Then, from Lemma \ref{lem4}, we know that the cost function \eqref{cost4} can be minimized as \eqref{cost5}. In this case, if we set $\Sigma_W=0$, there holds:
{\small\begin{align}\label{cost7}
 &\mathcal{J}^{*}_N=\mathbb{E}[X^{\top}(0)\mathcal{P}_N(0)X(0)]\notag\\
 \leq &\mathcal{J}^{*}_{N+1}=\mathbb{E}[X^{\top}(0)\mathcal{P}_{N+1}(0)X(0)].
\end{align}}
Since $X(0)$ is arbitrary, it can be implied from \eqref{cost7} that $\mathcal{P}_N(0)\leq \mathcal{P}_{N+1}(0)$. 

Furthermore, noting that the coefficient matrices of the LFNS \eqref{sta1} and cost function \eqref{cost4} are time-invariant, then we have
\begin{align}\label{ati}
  \mathcal{P}_N(k) & =\mathcal{P}_{N-k}(0).
\end{align}

Combining \eqref{cost7}-\eqref{ati}, we can conclude that $\mathcal{P}_k(0)$ is monotonically non-decreasing sequence with respect to $k$.

In the following, we will show $\mathcal{P}_k(0)$ is bounded. Noting that the LFNS \eqref{sta1} can
be feedback stabilizable, then from Definition \ref{def2}, there exist constant matrices $\hat{K}, \tilde{K}$ such that the LFNS \eqref{sta1} is feedback stabilizable with control strategies $\hat{U}(k)=\hat{K}\hat{X}(k|k)$ and $\tilde{U}(k)=\tilde{K}\tilde{X}(k|k)$.

Under Assumptions \ref{ass1} and \ref{ass3}, using \eqref{cost3}, select a constant $c_2(c_2>0)$, such that $Q\leq c_2I$, ${\hat{K}}^{\top}R\hat{K}\leq c_2I$ and ${\tilde{K}}^{\top}R\tilde{K}\leq c_2I$. Then, we have
{\small\begin{align*}
\mathcal{J}&=\mathbb{E}\bigg[\sum_{k=0}^{\infty}\gamma^k[X^{\top}(k)QX(k)+\hat{U}^{\top}(k)R{\hat{U}}(k)\notag\\
&+\tilde{U}^{\top}(k)R\tilde{U}(k)]\bigg] \notag\\
&=\mathbb{E}\bigg[\sum_{k=0}^{\infty}\gamma^k[X^{\top}(k)QX(k)+\hat{X}^{\top}(k|k){\hat{K}}^{\top}R\hat{K}\hat{X}^{\top}(k|k)\notag\\
&+\tilde{X}^{\top}(k|k){\tilde{K}}^{\top}R\tilde{K}\tilde{X}(k|k)]\bigg]\notag\\
&=\mathbb{E}\bigg[\sum_{k=0}^{\infty}\gamma^k[\hat{X}^{\top}(k|k)(Q+{\hat{K}}^{\top}R\hat{K})\hat{X}(k|k)\notag\\
&+\tilde{X}^{\top}(k|k)(Q+{\tilde{K}}^{\top}R\tilde{K})\tilde{X}(k|k)]\bigg]\notag\\
&\leq\mathbb{E}\bigg[\sum_{k=0}^{\infty}\gamma^k\big[2c_2\hat{X}^{\top}(k|k)\hat{X}(k|k)+2c_2\tilde{X}^{\top}(k|k)\tilde{X}(k|k)\big]\bigg]\notag\\
&=2c_2\mathbb{E}\bigg[\sum_{k=0}^{\infty}\gamma^k[\hat{X}^{\top}(k|k)\hat{X}(k|k)+\tilde{X}^{\top}(k|k)\tilde{X}(k|k)]\bigg].
\end{align*}}

According to control strategies  $\hat{U}(k)=\hat{K}\hat{X}(k|k)$ and $\tilde{U}(k)=\tilde{K}\tilde{X}(k|k)$, using \eqref{est12}, it can be obtained that
\begin{equation}\label{sta3}
\hspace{-4mm}\left\{ \begin{array}{ll}
&\mathbb{E}[\hat{X}^{\top}(k|k)\hat{X}(k|k)]\\
&=\mathbb{E}\big[\hat{X}^{\top}(0|0)[(A+B\hat{K})^k]^{\top}(A+B\hat{K})^k\hat{X}(0|0)\big]\\
&+\sum_{i=0}^{k-1}Tr\big(I_0\Sigma_W[(A+B\hat{K})^i]^{\top}(A+B\hat{K})^i\big),\\
\\
&\mathbb{E}[\tilde{X}^{\top}(k|k)\tilde{X}(k|k)]\\
&=\mathbb{E}\big[\tilde{X}^{\top}(0|0)[(A+B\tilde{K})^k]^{\top}(A+B\tilde{K})^k\tilde{X}(0|0)\big]\\
&+\sum_{i=0}^{k-1}Tr\big(I_1\Sigma_W[(A+B\tilde{K})^i]^{\top}(A+B\tilde{K})^i\big).
\end{array} \right.
\end{equation}

Combining Definition \ref{def1} and \ref{def2}, using \eqref{est12}, we know there exists a constant $c_3>0$, making $\lim_{k\to\infty}\mathbb{E}[X^{\top}(k)X(k)]=\lim_{k\to\infty}\{\mathbb{E}[\hat{X}^{\top}(k|k)\hat{X}(k|k)]
+\mathbb{E}[\tilde{X}^{\top}(k|k)\tilde{X}(k|k)]\}=c_3$. 

Due to $\mathbb{E}[\hat{X}^{\top}(k|k)\hat{X}(k|k)]\geq0$ and $\mathbb{E}[\tilde{X}^{\top}(k|k)\tilde{X}(k|k)]\geq0$, there exists constants $c_4>0$ and $c_5>0$ following:
 \begin{equation}\label{zzz}
\left\{ \begin{array}{ll}
\lim_{k\to\infty}\mathbb{E}[\hat{X}^{\top}(k|k)\hat{X}(k|k)]&=c_4,\\
\lim_{k\to\infty}\mathbb{E}[\tilde{X}^{\top}(k|k)\tilde{X}(k|k)]&=c_5.
\end{array} \right.
\end{equation}

From \eqref{sta3}, we have $\mathbb{E}\big[\hat{X}^{\top}(0|0)[(A+B\hat{K})^k]^{\top}(A+B\hat{K})^k\hat{X}(0|0)\big]\geq0$ and $\mathbb{E}\big[\sum_{i=0}^{k-1}Tr\big(I_0\Sigma_W[(A+B\hat{K})^i]^{\top}(A+B\hat{K})^i\big)\big]\geq0$. We can conclude that $\mathbb{E}\big[\hat{X}^{\top}(0|0)[(A+B\hat{K})^k]^{\top}(A+B\hat{K})^k\hat{X}(0|0)\big]$ and $\mathbb{E}\big[\sum_{i=0}^{k-1}Tr\big(I_0\Sigma_W[(A+B\hat{K})^i]^{\top}(A+B\hat{K})^i\big)\big]$ are convergent with $k\rightarrow+\infty$. Hence, there holds $\rho(A+B\hat{K})<1$. Similarly, $\rho(A+B\tilde{K})<1$ can be derived.

Using \eqref{sta3}, we have $\lim_{k\to\infty}\mathbb{E}\big[\hat{X}^{\top}(0|0)[(A+B\hat{K})^k]^{\top}(A+B\hat{K})^k\hat{X}(0|0)\big]=0$ and $\lim_{k\to\infty}\mathbb{E}\big[\tilde{X}^{\top}(0|0)[(A+B\tilde{K})^k]^{\top}(A+B\tilde{K})^k\tilde{X}(0|0)\big]=0$. Then, we have
{\small\begin{equation}\label{hhh}
\left\{ \begin{array}{ll}
&\lim_{k\to\infty}\mathbb{E}\big[\sum_{i=0}^{k-1}Tr\big(I_0\Sigma_W[(A+B\hat{K})^i]^{\top}(A+B\hat{K})^i\big)\big]\\&=c_4,\\
&\lim_{k\to\infty}\mathbb{E}\big[\sum_{i=0}^{k-1}Tr\big(I_1\Sigma_W[(A+B\tilde{K})^i]^{\top}(A+B\tilde{K})^i\big)\big]\\&=c_5.
\end{array} \right.
\end{equation}}

Due to $\rho(A+B\hat{K})<1$ and $\rho(A+B\tilde{K})<1$, using \eqref{sta3} and \eqref{hhh}, it can be concluded that
\begin{equation}\label{jjj}
\left\{ \begin{array}{ll}
&\mathbb{E}\big[\hat{X}^{\top}(0|0)[(A+B\hat{K})^k]^{\top}(A+B\hat{K})^k\hat{X}(0|0)\big]\notag\\
&\leq\mathbb{E}[\hat{X}^{\top}(0|0)\hat{X}(0|0)],\\
&\mathbb{E}\big[\tilde{X}^{\top}(0|0)[(A+B\tilde{K})^k]^{\top}(A+B\tilde{K})^k\tilde{X}(0|0)\big]\notag\\
&\leq\mathbb{E}[\tilde{X}^{\top}(0|0)\tilde{X}(0|0)],\\
&\sum_{i=0}^{k-1}Tr\big(I_0\Sigma_W[(A+B\hat{K})^i]^{\top}(A+B\hat{K})^i\big)\leq c_4,\\
&\sum_{i=0}^{k-1}Tr\big(I_1\Sigma_W[(A+B\tilde{K})^i]^{\top}(A+B\tilde{K})^i\big)\leq c_5.
\end{array} \right.
\end{equation}

Combining \eqref{sta3} and \eqref{jjj}, it can be obtained that:
\begin{equation*}
\left\{ \begin{array}{ll}
\mathbb{E}[\hat{X}^{\top}(k|k)\hat{X}(k|k)]\leq\mathbb{E}[\hat{X}^{\top}(0|0)\hat{X}(0|0)]+c_4,\\
\mathbb{E}[\tilde{X}^{\top}(k|k)\tilde{X}(k|k)]\leq\mathbb{E}[\tilde{X}^{\top}(0|0)\tilde{X}(0|0)]+c_5.
\end{array} \right.
\end{equation*}
Hence, we have
{\small\begin{align*}
&\mathbb{E}\big[\sum_{k=0}^{\infty}\gamma^k[\hat{X}^{\top}(k|k)\hat{X}(k|k)+\tilde{X}^{\top}(k|k)\tilde{X}(k|k)]\big]\notag\\
&\leq\mathbb{E}\big[\sum_{k=0}^{\infty}\gamma^k[\hat{X}^{\top}(0|0)\hat{X}(0|0)+\tilde{X}^{\top}(0|0)\tilde{X}(0|0)+c_4+c_5]\big]\notag\\
&=\mathbb{E}\big[\sum_{k=0}^{\infty}\gamma^k[X^{\top}(0)X(0)+c_4+c_5]\big]\notag\\
&=\frac{1}{1-\gamma}\mathbb{E}[X^{\top}(0)X(0)+c_4+c_5].
\end{align*}}

Then, the following relationship holds:
\begin{align*}
\mathcal{J}_N^{*}\leq \mathcal{J}_N\leq \mathcal{J}\leq \frac{2c_2}{1-\gamma}\mathbb{E}[X^{\top}(0)X(0)+c_4+c_5].
\end{align*}

In this case, by setting $\Sigma_W=0$, it can be deduced from \eqref{cost5} that $\mathcal{P}_k(0)$ is bounded. In conclusion, we have shown that 1) $\mathcal{P}_k(0)$ is monotonically non-decreasing with respect to $k$, and 2) $\mathcal{P}_k(0)$ is bounded. Thus, there exists constant matrix $\mathcal{P}$ satisfying $\lim_{k\to\infty} \mathcal{P}_k(0)=\mathcal{P}$. Furthermore, by taking limitations of $N\rightarrow+\infty$ on both sides of \eqref{gain2}, then \eqref{gain3} can be derived.

In the following, we will show $\mathcal{P}>0$ by contradiction. Otherwise, we assume that there exists nonzero $X(0)=[x_0^{\top}(0)~x_1^{\top}(0)]^\top\neq 0$ such that $X^\top(0)\mathcal{P}X(0)=0$.

By setting $\Sigma_W=0$ and using \eqref{cost4}, we can conclude from \eqref{cost5} that 
\begin{align*}
\mathcal{J}_N^*&=\mathbb{E}\bigg[\sum_{k=0}^{N}\gamma^k[X^{\top}(k)QX(k)+U^{\top}(k)RU(k)]\bigg]\notag\\
&=X^{\top}(0)\mathcal{P}X(0)=0,
\end{align*}
which indicates
\begin{align}\label{fff}
CX(0)=0, U(k)=0, k\geq0.
\end{align}
in which Assumption \ref{ass3} has been inserted.

Noting Assumption \ref{ass4}, $(A,C)$ is observable. It can be obtained from \eqref{fff} that $X(0)=0$, which contradicts with $X(0)\neq0$. Therefore, $\lim_{k\to\infty} \mathcal{P}_k(0)=\mathcal{P}>0$ is shown.

In the following, we will show the uniqueness of the solution $\mathcal{P}$. Considering $\Sigma_W=0$, let $\tilde{\mathcal{P}}$ is another solution to Riccati equation \eqref{ricc2} satisfying $\tilde{\mathcal{P}}>0$ and $\tilde{\mathcal{P}}\neq\mathcal{P}$. From \eqref{cost5}, the optimal value of the cost function in finite-horizon is as:
\begin{align*}
\mathcal{J}_N^*=\mathbb{E}[X^{\top}(0)\mathcal{P}X(0)]=\mathbb{E}[X^{\top}(0)\tilde{\mathcal{P}}X(0)].
\end{align*}

Since $X(0)$ is arbitrary, considering $X(0)\neq0$, we will obtain that $\mathcal{P}=\tilde{\mathcal{P}}$, which contradicts with $\tilde{\mathcal{P}}\neq\mathcal{P}$. Now, the uniqueness of $P$ can be shown.

Finally, \eqref{ricc3} will be shown. Using \eqref{ricc2}, we can obtain that $\mathcal{P}$ satisfies
\begin{align}\label{lyp1}
 \mathcal{P}=\gamma(A-BH)^{\top}\mathcal{P}(A-BH)+Q+H^{\top}RH.
\end{align}
 Combining the optimal control strategies \eqref{contr7} and \eqref{contr8}, the control strategy \eqref{contr9} can be derived with $k\rightarrow+\infty$. In this case, using \eqref{contr9}, it can be obtain $\hat{K}=\tilde{K}=-H$. Hence, there holds $\rho(A-BH)<1$.
 
 Then, \eqref{ricc3} can be derived from \eqref{lyp1}. The proof of the necessity is complete.

`Sufficiency':  Under Assumptions \ref{ass3} and \ref{ass4}, suppose that $\mathcal{P}>0$ is the unique solution to \eqref{ricc2} and \eqref{ricc3} holds, we shall show that  the LFNS \eqref{sta1} is feedback stabilizable with the control strategy
\eqref{contr9}, which also minimizes \eqref{cost3}.

Using \eqref{lyp1}, there holds:
\begin{align}\label{lyp2}
 \mathcal{P}=(A-BH)^{\top}\mathcal{P}(A-BH)+F,
\end{align}
where $F=\frac{Q+H^{\top}RH-(1-\gamma)\mathcal{P}}{\gamma}$. Using \eqref{ricc3}, we can conclude $F>0$. It can be derived from \eqref{lyp2} that
\begin{align}\label{lyp3}
P>(A-BH)^{\top}P(A-BH).
\end{align}

For the sake of discussion, we denote
\begin{align}\label{sta5}
 \xi_{k+1}=(A-BH)\xi_{k}.
\end{align}
Then we have
\begin{align}\label{sta6}
\hspace{-2mm}\xi^{\top}(k)\xi(k)=\xi^{\top}(0)[(A-BH)^k]^{\top}(A-BH)^k\xi(0).
\end{align}

In the following, we will prove that $\xi^{\top}(k)\xi(k)$ is convergent. From \eqref{lyp3}, we have that
\begin{align*}
 \xi^{\top}(k)P\xi(k)>\xi^{\top}(k)(A-BH)^{\top}P(A-BH)\xi(k).
\end{align*}
Then, using \eqref{sta5} there holds:
\begin{align*}
\xi^{\top}(k)\xi(k)>\xi^{\top}(k+1)\xi(k+1).
\end{align*}
Obviously, $\xi^{\top}(k)\xi(k)$ monotonically decreases with $k$. Noting $\xi^{\top}(k)\xi(k)\geq0$. we can conclude that $\xi^{\top}(k)\xi(k)$ is convergent with $k\rightarrow+\infty$.

Consequently, we shall prove that the LFNS \eqref{sta1} can be feedback stabilizable by \eqref{contr9}. 

Noting $\xi^{\top}(k)\xi(k)$ is convergent with $k\rightarrow+\infty$, from \eqref{sta6}, we have $\lim_{k\to\infty}(A-BH)^k=0$, i.e., $\rho(A-BH)<1$. Using \eqref{sta2}, it can be obtained that
{\small\begin{equation}\label{sta4}
\left\{ \begin{array}{ll}
&\mathbb{E}[X(k)]=(A-BH)^kE[X(0)],\\
\\
&\mathbb{E}[X^{\top}(k)X(k)]=\mathbb{E}\big[X^{\top}(0)[(A-BH)^k]^{\top}(A\\
&-BH)^kX(0)\big]+\sum_{i=0}^{k-1}Tr\big(\Sigma_W[(A-BH)^i]^{\top}(A-BH)^i\big).
\end{array} \right.
\end{equation}}
From \eqref{sta4}, we have
\begin{align}\label{eee}
&\lim_{k\to\infty}\mathbb{E}[X(k)]=0.
\end{align}

Now, we will show $\lim_{k\to\infty}\mathbb{E}[X^{\top}(k)X(k)]<\infty$. For the sake of discussion, we define the Lyapunov function candidate $V(k)$ as:
\begin{align}\label{lyp4}
V(k)=\gamma^k\mathbb{E}[X^{\top}(k)\mathcal{P}X(k)]+\sum_{i=k}^{\infty}\gamma^{i+1}Tr(\Sigma_W\mathcal{P}).
\end{align}
Then, using\eqref{sta2}, the control strategy \eqref{contr9}, the Riccati equation \eqref{ricc2} and the Lyapunov function \eqref{lyp4}, we have
\begin{align}\label{lyap2}
&V(k)-V(k+1)\notag\\
&=\gamma^k\mathbb{E}[X^{\top}(k)\mathcal{P}X(k)]-\gamma^{k+1}\mathbb{E}[X^{\top}(k+1)\mathcal{P}X(k+1)]\notag\\
&+\gamma^{k+1}Tr(\Sigma_W\mathcal{P})\notag\\
&=\gamma^k\mathbb{E}\big[X^{\top}(k)[\mathcal{P}-\gamma A^{\top}\mathcal{P}A+\gamma^2L^{\top}\Psi^{-1}L]X(k)\big]\notag\\
&-\gamma^k\mathbb{E}[X^{\top}(k)\gamma A^{\top}\mathcal{P}AX(k)]-\gamma^k\mathbb{E}[U^{\top}(k)\Psi U(k)]\notag\\
&+\gamma^k\mathbb{E}[U^{\top}(k)RU(k)]+\gamma^k\mathbb{E}[X^{\top}(k)\gamma A^{\top}\mathcal{P}AX(k)]\notag\\
&-\gamma^k\mathbb{E}[X^{\top}(k)\gamma^2L^{\top}\Psi^{-1}LX(k)]\notag\\
&=\gamma^k\mathbb{E}[X^{\top}(k)QX(k)]+\gamma^k\mathbb{E}[U^{\top}(k)RU(k)]\notag\\
&-\gamma^k\mathbb{E}\big[[\hat{U}(k)+H\hat{X}(k|k)]^{\top}\Psi[\hat{U}(k)+H\hat{X}(k|k)]\big]\notag\\
&-\gamma^k\mathbb{E}\big[[\tilde{U}(k)+H\tilde{X}(k|k)]^{\top}\Psi[\tilde{U}(k)+H\tilde{X}(k|k)]\big]\notag\\
&=\gamma^k\mathbb{E}[X^{\top}(k)QX(k)+U^{\top}(k)RU(k)]\geq0.
\end{align}
Obviously, $V(k)$ is monotonically non-increasing with respect to $k$, $V(k)\geq0$. Hence, $V(k)$ is convergent with $k$.

From \eqref{lyp4}, we know $\gamma^k\mathbb{E}[X^{\top}(k)\mathcal{P}X(k)]\geq0$ and $\sum_{i=k}^{\infty}\gamma^{i+1}Tr(\Sigma_W\mathcal{P})\geq0$. Then, $\gamma^k\mathbb{E}[X^{\top}(k)\mathcal{P}X(k)]$ is convergent with $k$. Substituting the control strategy \eqref{contr9} into the LFNS \eqref{sta2}, we have
\begin{align}\label{ppp}
&\gamma^k\mathbb{E}[X^{\top}(k)\mathcal{P}X(k)]\notag\\
&=\gamma^k\mathbb{E}\big[X^{\top}(0)[(A-BH)^k]^{\top}\mathcal{P}(A-BH)^kX(0)\big]\notag\\
&+\gamma^k\sum_{i=0}^{k-1}Tr\big(\Sigma_W[(A-BH)^i]^{\top}\mathcal{P}(A-BH)^i\big).
\end{align}
Similarly, from \eqref{ppp}, we know $\gamma^k\sum_{i=0}^{k-1}Tr\big(\Sigma_W[(A-BH)^i]^{\top}\mathcal{P}(A-BH)^i\big)$ is convergent with $k$. Due to $\rho(A-BH)<1$ and $0<\gamma<1$, it can be obtained that $\sum_{i=0}^{k-1}Tr\big(\Sigma_W[(A-BH)^i]^{\top}(A-BH)^i\big)$ is convergent with $k$. Then, using \eqref{sta4}, we have
\begin{align}\label{ggg}
 \lim_{k\to\infty}\mathbb{E}[X^{\top}(k)X(k)]=c_6,
\end{align}
where $c_6$ is a constant.

According to \eqref{eee} and \eqref{ggg}, the control strategy in Theorem \ref{th3} make the LFNS \eqref{sta1} feedback stabilizable.  
 
Finally, we will calculate the optimal cost function $\mathcal{J}^*$. By taking summation from $k = 0$ to $k = N$ on both sides of \eqref{lyap2}, there holds:
\begin{align*}
&V(0)-V(N+1)\notag\\
&=\sum_{k=0}^{N}\gamma^k\mathbb{E}[X^{\top}(k)QX(k)+U^{\top}(k)RU(k)].
\end{align*}
Then, we can obtain that
\begin{align}\label{sum3}
&\sum_{k=0}^{N}\gamma^k\mathbb{E}[X^{\top}(k)QX(k)+U^{\top}(k)RU(k)]\notag\\
&=\mathbb{E}[X^{\top}(0)\mathcal{P}X(0)]+\sum_{k=0}^{N}\gamma^{k+1}Tr(\Sigma_W\mathcal{P})\notag\\
 &-\gamma^{N+1}\mathbb{E}[X^{\top}(N+1)\mathcal{P}X(N+1)].
\end{align}
From \eqref{ggg}, there holds $\lim_{k\to\infty}\gamma^k\mathbb{E}X^{\top}(k)\mathcal{P}X(k)=0$. Using \eqref{cost3} and \eqref{sum3}, we have
\begin{align*}
\mathcal{J}^*&=\mathbb{E}\bigg[\sum_{k=0}^{\infty}\gamma^k[X^{\top}(k)QX(k)+ U^{\top}(k)RU(k)]\bigg]\notag\\
&=\mathbb{E}[X^{\top}(0)\mathcal{P}X(0)]+\frac{\gamma}{1-\gamma}Tr(\Sigma_W\mathcal{P})\notag\\
&-\lim_{k\to\infty}\gamma^{k+1}\mathbb{E}[X^{\top}(k+1)\mathcal{P}X(k+1)]\notag\\
&=\mathbb{E}[X^{\top}(0)\mathcal{P}X(0)]+\frac{\gamma}{1-\gamma}Tr(\Sigma_W\mathcal{P}).
\end{align*}

In conclusion, the optimal cost fucntion \eqref{cost6} has been verified, which ends the sufficiency proof.
\end{proof}

\begin{remark}\label{rem6}
Noting that relationship $\mathbb{E}[X^{\top}(k)X(k)]=\mathbb{E}[\hat{X}^{\top}(k|k)\hat{X}(k|k)]
+\mathbb{E}[\tilde{X}^{\top}(k|k)\tilde{X}(k|k)]$ holds, then it can be observed from Theorem \ref{th3} and its proof that, under the assumptions of Theorem \ref{th3}, the stability of the optimal estimator (Theorem \ref{th1} and Remark \ref{rem3}) is equivalent to the feedback stabilizability of the LFNS \eqref{sta1}.
\end{remark}

\begin{remark}\label{rem7}
Compared to previous works \cite{hc1972,sl2010, nkj2018, am2022}, feedback stabilization problem for the LFNS \eqref{sta1} is firstly solved in Theorem \ref{th3}, and the necessary and sufficient conditions for its feedback stabilizability are proposed, serving as one of the main contributions and innovations of this paper. 
\end{remark}

\section{Applications in LF-AUV System}\label{ss6}

In this section, we proceed to apply the proposed main theoretical results of Section 3-Section 5 to the trajectory tracking control problem of LF-AUV system.

\subsection{Introduction to the AUV Model}
Firstly, for the sake of discussion, we shall  consider a three-DOF AUV model. Assume that the AUV moves in the horizontal plane, and the roll, pitch, and heave are close to zero. The motion of the AUV is described by the surge, sway, and yaw dynamics, represented as, \cite{wsyzj2018,qxfy2018}:
\begin{align}\label{AUV1}
\Dot{\eta}=R(\eta)\nu, ~M\Dot{\nu}+C(\nu)\nu+D(\nu)\nu=\tau_s,
\end{align}
where $\eta=[x,y,\psi]^{\top}\in\mathbb{R}^{3}$ denotes the positions and the yaw angle of the AUV, and $\nu=[u,v,r]^{\top}\in\mathbb{R}^{3}$ denotes the translation velocities and the angular velocity of the AUV. $\tau_s=[\tau_u, \tau_v, \tau_r]^{\top}\in\mathbb{R}^{3}$; $\tau_u$ and $\tau_v$ represent longitudinal (surge) and lateral (sway) thrusts of the AUV, respectively; $\tau_r$ represents the yaw moment of the AUV. $R(\eta)\in\mathbb{R}^{3\times3}$ is the rotation matrix, $M\in\mathbb{R}^{3}$ is the positive-definite inertia matrix, $C(\nu)\in\mathbb{R}^{3\times3}$ is  the Coriolis matrix, and $D(\nu)\in\mathbb{R}^{3\times3}$ is the damping matrix.

Subsequently, by linearizing and discretizing \eqref{AUV1}, the discrete-time three-DOF AUV model can be presented as follows, \cite{w2020}:
\begin{align}\label{AUV4}
z(k+1)=A_zz(k)+B_zu_z(k)+w_z(k),
\end{align}
in which $z(k)=[z^{1{\top}}(k),z^{2{\top}}(k)]^{\top}\in\mathbb{R}^{6}$ represents the tracking error between the AUV and its desired trajectory. The given reference trajectory $\eta_d(k)=[x_d(k),y_d(k),\psi_d(k)]^{\top}\in\mathbb{R}^{3}$. $w_z(k)$ is assumed to be Gaussian white noise, representing the random interference induced by modeling inaccuracies, linearization errors, and environmental disturbances. By setting $T$ as the sampling period of the system \eqref{AUV4}, the coefficient matrices in \eqref{AUV4} are given as follows:
{\small\begin{equation}\label{mmm}
\left\{ \begin{array}{ll}
A_z&=T\left[\hspace{-1mm}
  \begin{array}{ccc}
    0 & I_3\\
    0 & 0
  \end{array}
\hspace{-1mm}\right]+I_6,\\
B_z&=T[0,I_3]^{\top},\\
\eta(k)&=[x(k),y(k),\psi(k)]^{\top},\\
z^{1}(k)&=\eta(k)-\eta_d(k),\\
z^{2}(k)&=T^{-1}\{[\eta(k)-\eta(k-1)]-[\eta_d(k)\\
&-\eta_d(k-1)]\},\\
u_z(k)&=R(k)M^{-1}\tau_s(k)-T^{-2}[\eta_d(k+1)\\
&-2\eta_d(k)+\eta_d(k-1)]+R(k)M^{-1}h(k),\\
h(k)&=T^{-1}MR^{-1}(k)[R(k)-R(k-1)]\nu(k)\\
&-C(k)\nu(k)-D(k)\nu(k).
\end{array} \right.
\end{equation}}

\subsection{Decentralized Control of LF-AUV System}

In this section, we will investigate the optimal decentralized feedback stabilization control problem for the following LF-AUV system with asymmetric information structure:
\begin{equation}\label{sta7}
\hspace{-1mm}\left\{ \begin{array}{ll}
z_0(k+1)&=A_{z00}z_0(k)+B_{z00}u_{z0}(k)+w_{z0}(k),\\
z_1(k+1) &= A_{z11}z_1(k)+B_{z11}u_{z1}(k)+w_{z1}(k)\\
 &+A_{z10}z_0(k)+B_{z10}u_{z0}(k),
\end{array} \right.
\end{equation}
where $k$ is the time instant, the states $z_i(k)\in\mathbb{R}^{6}$, $i=0,1$, represent the tracking errors at time $k$ of the leader AUV and the follower AUV, respectively, $u_{zi}(k)\in\mathbb{R}^{3}$, $i=0,1$ are the control inputs of the leader AUV and the follower AUV, and $w_{zi}(k)$, $i=0,1$ are the Gaussian white noises satisfying $w_{zi}(k)\sim\mathcal{N}(0,\Sigma_{w_{zi}})$. Moreover, the physical meanings of the coefficient matrices $A_{z00}, A_{z1i}, B_{z00}, B_{z1i}, i=0,1$ in \eqref{sta7} are the same with those in \eqref{mmm}. The initial states $z_i(0)$ satisfying $z_i(0)\sim\mathcal{N}(\Bar{z}_i, \Sigma_{z_i})$, $i=0,1$. Without loss of generality, the initial states and the noise vectors at each step $\{z_0(0),z_1(0), w_{z0}(0), \cdots, w_{z0}(k), w_{z1}(0), \cdots ,w_{z1}(k)\}$ are all assumed to be mutually independent.

To facilitate the discussion, we rewrite \eqref{sta7} as the following compact form:
\begin{align}\label{sta8}
  Z(k+1) & =A_ZZ(k)+B_ZU_Z(k)+W_Z(k),
\end{align}
where $Z(k)=[z_0^{\top}(k)~z_1^{\top}(k)]^\top$, $U_Z(k)=[u_{z0}^{\top}(k)~u_{z1}^{\top}(k)]^\top$, $A_Z=\left[\hspace{-1mm}
  \begin{array}{ccc}
    A_{z00} & 0\\
    A_{z10} & A_{z11}
  \end{array}
\hspace{-1mm}\right], B_Z= \left[\hspace{-1mm}
  \begin{array}{ccc}
    B_{z00} & 0\\
    B_{z10} & B_{z11}
  \end{array}
\hspace{-1mm}\right]$ and $W_Z(k)= \left[\hspace{-1mm}
  \begin{array}{ccc}
    w_{z0}(k)\\
    w_{z1}(k)
  \end{array}
\hspace{-1mm}\right] $.

As shown in Fig. \ref{0},
\begin{figure}
\centerline{\includegraphics[width=3.0in]{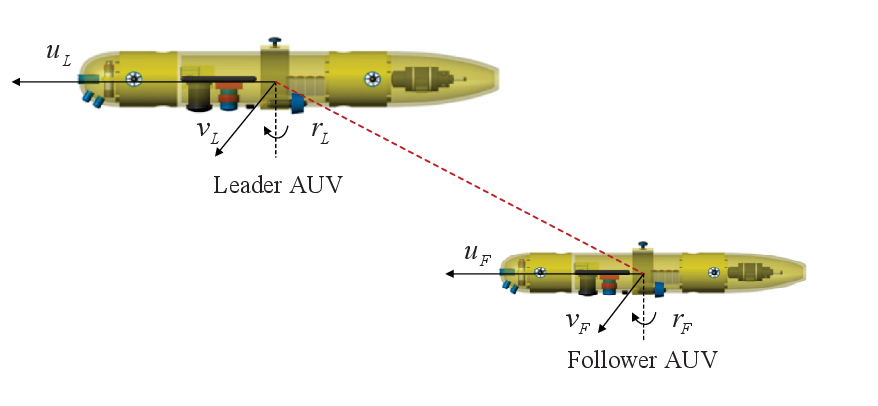}}
\caption{Schematic diagram of leader-follower AUV (LF-AUV) system.}
\label{0}
\end{figure}
for the considered LF-AUV system \eqref{sta7}, it is assumed that the state information and the control inputs of the leader AUV will affect the follower AUV, while the opposite is not true. Hence, for the considered scenario, the information sets accessed by the leader AUV and the follower AUV can be given as follows:
\begin{equation*}
\left\{ \begin{array}{ll}
\mathcal{S}_{z0}(k)&=\{z_0(0),\cdots,z_0(k),u_{z0}(0),\cdots,u_{z0}(k-1)\},\notag\\
  \mathcal{S}_{z1}(k)&=\{z_0(0),\cdots,z_0(k),u_{z0}(0),\cdots,u_{z0}(k-1),\notag\\
   &~~z_1(0),\cdots,z_1(k),u_{z1}(0),\cdots,u_{z1}(k-1)\}.
\end{array} \right.
\end{equation*}
Let us denote $\mathcal{F}_{z0}(k), \mathcal{F}_{z1}(k)$ as the $\sigma$-algebras generated by $\mathcal{S}_{z0}(k), \mathcal{S}_{z1}(k)$, respectively.

We aim to minimize both the tracking error of the LF-AUV system \eqref{sta7} following desired trajectories and the energy consumption cost during the tracking process. Therefore, we introduce the following quadratic cost function:
{\small\begin{align}\label{cost9}
\mathcal{J}_{Z}=\mathbb{E}\bigg[\sum_{k=0}^{\infty}\gamma^k[Z^{\top}(k)Q_ZZ(k)+ U_Z^{\top}(k)R_ZU_Z(k)]\bigg],
\end{align}}
where $Q_Z=\left[\hspace{-1mm}
  \begin{array}{cccc}
    Q_{z00}&Q_{z01}\\
    Q_{z10}&Q_{z11}
  \end{array}
\hspace{-1mm}\right]$, $R_Z=\left[\hspace{-1mm}
  \begin{array}{cccc}
    R_{z00}&R_{z01}\\
    R_{z10}&R_{z11}
  \end{array}
\hspace{-1mm}\right]$. $0<\gamma<1$ is a given discount factor.

Similar to Assumptions \ref{ass1}-\ref{ass4}, we make the following assumptions:
\begin{assumption}\label{ass5}
$u_{zi}(k)$ is $\mathcal{F}_{zi}(k)$-measurable, $i=0,1$ and $\sum_{k=0}^{N}\mathbb{E}[u_{zi}^\top u_{zi}]<\infty$.
\end{assumption}

\begin{assumption}\label{ass6}
$R_Z>0$ and $Q_Z = C_Z^{\top}C_Z\geq0$ for some matrix $C_Z$.
\end{assumption}

\begin{assumption}\label{ass7}
$(A_Z, Q_Z^{1/2})$ is observable.
\end{assumption}

In this section, the following problem will be solved.
\begin{problem}\label{prob4}
Under Assumptions \ref{ass5}-\ref{ass7}, find $u_{zi}(k), i=0,1, k=0,\cdots,N$ to minimize cost function \eqref{cost9}, which makes the LF-AUV system \eqref{sta7} feedback stabilizable.
\end{problem}

At the end of this section, based on the conclusions of Theorems \ref{th1}-\ref{th3}, we provide the solution to Problem \ref{prob4} without proof.
\begin{theorem}\label{th4}
Under Assumptions \ref{ass5}-\ref{ass7}, the LF-AUV \eqref{sta7} can be
feedback stabilizable by the following control strategy:
\begin{equation}\label{contr10}
\left\{ \begin{array}{ll}
u_{z0}(k)&=-H_{z00}z_0(k)-H_{z01}\hat{z}_1(k|k),\\
u_{z1}(k)&=-H_{z10}z_0(k)-H_{z11}z_1(k),
\end{array} \right.
\end{equation}
where the recursive optimal
estimation $\hat{z}_1(k|k)$ as follows:
\begin{align*}
\hat{z}_1(k|k)&=A_{z11}\hat{z}_1(k-1|k-1)+B_{z11}\hat{u}_{z1}(k-1)\notag\\
&+A_{z10}{z}_0(k-1)+B_{z10}{u}_{z0}(k-1),
\end{align*}
if and only if the following two conditions hold:
\begin{itemize}
  \item [1)] There exist a unique
solution $\mathcal{P}_Z>0$ satisfying the following  Riccati equation:
\begin{align}\label{ricd}
 \mathcal{P}_Z=Q_Z+\gamma A_Z^{\top}\mathcal{P}_ZA_Z-\gamma^2L_Z^{\top}\Psi_Z^{-1}L_Z,
\end{align}
\item [2)] \begin{align}\label{ricdi}
(1-\gamma)\mathcal{P}_Z< Q_Z+H_Z^{\top}R_ZH_Z,
\end{align}
in which the gain matrices $H_{Zij}, i,j=0,1$ is the block matrix of $H_Z=\left[\hspace{-1mm}
  \begin{array}{cccc}
    H_{z00}&H_{z01}\\
    H_{z10}&H_{z11}
  \end{array}\hspace{-1mm}\right]$, satisfying \begin{equation*}
\left\{ \begin{array}{ll}
H_Z&=\gamma \Psi_Z^{-1}L_Z,\\
 L_Z&=B_Z^{\top}\mathcal{P}_ZA_Z,\\
 \Psi_Z&=R_Z+\gamma B_Z^{\top}\mathcal{P}_ZB_Z.
\end{array} \right.
\end{equation*}
\end{itemize}
Noting that $R_0(k), R_1(k)$ are nonsingular, \cite{w2020}, the force vectors $\tau_{si}(k)\in\mathbb{R}^{3}, i=0,1$ of the leader AUV and the follower AUV can be uniquely presented as:
{\small\begin{equation}\label{act}
\left\{ \begin{array}{ll} 
\tau_{s0}(k)&=M_0R_0^{-1}(k)u_{z0}(k)-h_0(k)\\
&+T^{-2}M_0R_0^{-1}(k)[\eta_{d0}(k+1)-2\eta_{d0}(k)+\eta_{d0}(k-1)],\\
\tau_{s1}(k)&=M_1R_1^{-1}(k)u_{z1}(k)-h_1(k)\\
&+T^{-2}M_1R_1^{-1}(k)[\eta_{d1}(k+1)-2\eta_{d1}(k)+\eta_{d1}(k-1)].
\end{array} \right.
\end{equation}}                                   
In this case, the optimal cost function is given by
\begin{align}\label{cost}
\mathcal{J}_Z^{*}=\mathbb{E}[Z^{\top}(0)\mathcal{P}_ZZ(0)]+\frac{\gamma}{1-\gamma}Tr(\Sigma_{W_Z}\mathcal{P}_Z),
\end{align}
where $\Sigma_{W_Z}=[\Sigma_{w_{z0}}^{\top}, \Sigma_{w_{z1}}^{\top}]^{\top}$.
\end{theorem}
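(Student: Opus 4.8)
The plan is to obtain Theorem \ref{th4} as a direct specialization of Theorems \ref{th1} and \ref{th3}, combined with an elementary inversion of the kinematic relation \eqref{mmm}. First I would set up the dictionary: the LF-AUV system \eqref{sta7}--\eqref{sta8} is structurally identical to the LFNS \eqref{sta1}--\eqref{sta2}, with the block-lower-triangular pair $(A_Z,B_Z)$, the Gaussian noises $w_{zi}(k)$, the mutually independent initial states, and the nested information sets $\mathcal{S}_{z0}(k)\subset\mathcal{S}_{z1}(k)$ with $\sigma$-algebras $\mathcal{F}_{z0}(k),\mathcal{F}_{z1}(k)$ playing exactly the roles of $A,B$, $w_i(k)$, and $\mathcal{F}_0(k),\mathcal{F}_1(k)$. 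Likewise, Assumptions \ref{ass5}--\ref{ass7} are the verbatim counterparts of Assumptions \ref{ass1}, \ref{ass3}, \ref{ass4}, and the discounted cost \eqref{cost9} matches \eqref{cost3} under $(Q,R)\mapsto(Q_Z,R_Z)$. Once this correspondence is in place, Lemma \ref{lem2} and Theorem \ref{th1} apply to \eqref{sta7} without change, which immediately yields the recursion for $\hat{z}_1(k|k)$ stated in Theorem \ref{th4}.

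Next I would invoke Theorem \ref{th3} under the same substitution. This gives in one step both directions of the equivalence: the LF-AUV system \eqref{sta7} is feedback stabilizable by a control law of the form \eqref{contr10} if and only if the Riccati equation \eqref{ricd} admits a unique solution $\mathcal{P}_Z>0$ and the strict inequality \eqref{ricdi} holds, with $H_Z,L_Z,\Psi_Z$ defined exactly as $H,L,\Psi$ in \eqref{gain3}. The optimal value \eqref{cost} is then read off from \eqref{cost6} with $\Sigma_W$ replaced by $\Sigma_{W_Z}$. No new estimate or inequality is required here; one only checks that the discount factor and the observability condition transfer, which they do by construction, so the orthogonal decomposition $U_Z=\hat{U}_Z+\tilde{U}_Z$ and the identity $\mathbb{E}[Z^{\top}Z]=\mathbb{E}[\hat{Z}^{\top}\hat{Z}]+\mathbb{E}[\tilde{Z}^{\top}\tilde{Z}]$ used in the proof of Theorem \ref{th3} remain valid verbatim.

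The remaining, genuinely AUV-specific, step is recovering the physical thrust/moment vectors $\tau_{si}(k)$ from the abstract inputs $u_{zi}(k)$. From the last lines of \eqref{mmm}, for each agent $i=0,1$ the input satisfies $u_{zi}(k)=R_i(k)M_i^{-1}\tau_{si}(k)-T^{-2}[\eta_{di}(k+1)-2\eta_{di}(k)+\eta_{di}(k-1)]+R_i(k)M_i^{-1}h_i(k)$. Since $M_i$ is positive definite, hence invertible, and $R_i(k)$ is nonsingular (\cite{w2020}), this affine relation can be solved uniquely for $\tau_{si}(k)$, producing precisely \eqref{act}. Substituting the stabilizing law \eqref{contr10} into \eqref{act} then delivers the implementable force/moment inputs and completes the argument.

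I expect the only point requiring care, rather than a real obstacle, to be bookkeeping: confirming that the partially nested information pattern of the AUV problem is exactly the one under which Theorems \ref{th1} and \ref{th3} were established, and that the AUV weighting data indeed satisfy $R_Z>0$, $Q_Z=C_Z^{\top}C_Z\geq0$, and observability of $(A_Z,Q_Z^{1/2})$. Beyond the nonsingularity of $M_i$ and $R_i(k)$ needed for \eqref{act}, there is no further analytic difficulty, which is why the result is stated without a separate proof.
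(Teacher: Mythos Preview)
Your proposal is correct and follows precisely the paper's own approach: the paper explicitly states that Theorem \ref{th4} is obtained ``based on the conclusions of Theorems \ref{th1}--\ref{th3}'' and omits the proof due to space limitations. Your dictionary-based specialization of Theorems \ref{th1} and \ref{th3} to the LF-AUV data, together with the algebraic inversion of \eqref{mmm} to recover $\tau_{si}(k)$, is exactly what the paper intends.
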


\begin{proof}
Due to space limitations, the detailed proof is omitted here.
\end{proof}

\section{Numerical Example}\label{ss7}
In this section, we shall employ numerical example to validate the effectiveness of the theoretical results proposed in this paper. Without loss of generality, for the LF-AUV system \eqref{sta7} and cost function \eqref{cost9} in Section 6, we set $T=1s$, $\gamma=0.9$, $w_{zi}(k)\sim\mathcal{N}(0,1), i=0,1$. Moreover, the coefficient matrices of the LF-AUV system in \eqref{AUV1}, \eqref{sta7}, and \eqref{cost9} are as follows:  
{\small $M_0=\left[\hspace{-1mm}
  \begin{array}{ccc}
 37.93&	      0 &	     0\\
    0 &	   72.5&	-1.93\\
    0 &	  -1.93 &	8.33 
\end{array}
\hspace{-1mm}\right]\hspace{-1mm}, M_1=\left[\hspace{-1mm}
  \begin{array}{ccc}
20.74 &	      0 &	     0\\
    0 &	  38.24 &	-6.19\\
    0 &	  -8.97 &	 2.92
\end{array}
\hspace{-1mm}\right]\hspace{-1mm}$;}
{\small$R_i(k)=\left[\hspace{-1mm}
  \begin{array}{ccc}
cos\big(\psi_i(k)\big) &	-sin\big(\psi_i(k)\big) &	   0\\
sin\big(\psi_i(k)\big) &	 cos\big(\psi_i(k)\big) &	   0\\
  0 &	     0 &	 1
\end{array} \right]\hspace{-1mm}, i=0,1$; $C_0(k)=\left[\hspace{-1mm}
  \begin{array}{ccc}
                      0 &	           0 \\
                      0 &	           0 \\
72.50v_0(k)-1.93r_0(k) & -37.93u_0(k)
\end{array} \right.$\\
$\left.
  \hspace{38mm}\begin{array}{ccc}
                  -72.50v_0(k)+1.93r_0(k)\\
                     37.93u_0(k)\\
 0 
\end{array} \right]\hspace{-1mm}$,\\ $C_1(k)=\left[\hspace{-1mm}
  \begin{array}{ccc}
                      0 &	           0 \\
                      0 &	           0 \\
38.24v_1(k)-6.19r_1(k) & -20.74u_1(k)
\end{array} \right.$\\
$\left.
  \hspace{38mm}\begin{array}{ccc}
                  -38.24v_1(k)+6.19r_1(k)\\
                     20.74u_1(k)\\
 0 
\end{array} \right]\hspace{-1mm}$;\\
$D_0(k)=\left[\hspace{-1mm}
  \begin{array}{ccc}
-13.50-1.62u_0(k)-1.62|u_0(k)|\\
0 \\
0 
\end{array} \right.$\\ $\left.\begin{array}{ccc}
-4.48v_0(k)+35.50r_0(k)&	  0\\
-175.21-1310|v_0(k)|+24.96|r_0(k)|&	  -25.06+0.63|r_0(k)|\\
23.83+40.01|v_0(k)|&      31.42-94|r_0(k)|-93.16v_0(k)
\end{array} \right]\hspace{-1mm}$, $D_1(k)=\left[\hspace{-1mm}
  \begin{array}{ccc}
-3.00-3.37u_1(k)-1.25|u_1(k)|\\
0 \\
0 
\end{array} \right.$\\ $\left.\begin{array}{ccc}
-40.99v_1(k)+17.86r_1(k)&	  0\\
-26.72-45.29|v_1(k)|+11.72|r_1(k)|&	  -12.67+0.34|r_1(k)|\\
26.09+24.58|v_1(k)|&      35.56+0.03|r_1(k)|-0.94v_1(k)
\end{array} \right]\hspace{-1mm}$}; 
{\small$A_{z00}=\left[\hspace{-1mm}
  \begin{array}{cccccc}
  -0.50 &	-1.13 &	 0.49 &	-1.22 &	-0.21 &	 0.42\\
  -1.14 &	 0.20 &	 0.20 &	-0.58 &	-0.72 &	 0.05\\
  -2.46 &	 1.47 &	-1.40 &	 4.17 &	-0.20 &	-2.24\\
  -0.93 &	 0.43 &	-1.02 &	 3.07 &	 0.14 &	-1.30\\
   0.17 & -0.59 & -0.19 &	0.82 &	0.13 & -0.21\\
   0.50 &	0.53 & -0.83 &	2.16 &	0.26 & -0.50
  \end{array}
\hspace{-1mm}\right]\hspace{-1mm}$},
{\small$A_{z11}=\left[\hspace{-1mm}
  \begin{array}{cccccc}
     -0.59 &	 0.27 &	-0.01 &	-0.32 &	-0.52 &	-0.94\\
     -0.15 &	 0.39 &	 0.20 &	-0.05 &	-0.90 & -1.72\\
     -0.40 &	 0.09 &	-0.15 &	-0.32 &	-0.10 &	 0.10\\
      0.03 &	0.16 &	0.27 &	0.31 & -0.11 & -0.82\\
      0.08 & -0.20 &	0.03 &	0.16 &	0.29 &	0.53\\
     -0.54 &	 0.25 &	 0.16 &	-0.19 &	-0.36 &	-0.86
  \end{array}
\hspace{-1mm}\right]\hspace{-1mm}$},
{\small$A_{z10}=\left[\hspace{-1mm}
  \begin{array}{cccccc}
     0.79 & -1.00 &	0.89 &	0.14 &	0.50 &	0.26\\
    -0.64 &	 1.00 & -0.52 &	 0.73 &	-0.15 &	 0.75\\
    -0.28 &	 0.45 & -0.01 &	 0.52 &	-0.03 &	 0.22\\
    -0.68 &	 1.33 & -0.15 &	 1.27 &	-0.67 &	 1.23\\
    -1.66 &	 3.67 & -3.14 &	 1.52 &	-1.15 &	 2.97\\
     0.91 & -0.94 &	0.20 & -0.45 &	0.52 & -0.12
  \end{array}
\hspace{-1mm}\right]\hspace{-1mm}$;}
{\small$B_{z00}=\left[\hspace{-1mm}
  \begin{array}{cccccc}
  0.92 & 0.57 & 0.09\\
  0.40 & 0.68 & 0.36\\
  0.68 & 0.13 & 0.57\\
  0.49 & 0.06 & 0.72\\
  0.29 & 0.69 & 0.01\\
  0.25 & 0.29 & 0.06
  \end{array}
\hspace{-1mm}\right]\hspace{-1mm}$, $B_{z11}=\left[\hspace{-1mm}
  \begin{array}{cccccc}
       0.99 & 0.46 & 0.86\\
       0.24 & 0.35 & 0.72\\
       0.50 & 0.32 & 0.82\\
       0.53 & 0.39 & 0.44\\
       0.60 & 0.41 & 0.98\\
       0.13 & 0.75 & 0.12
  \end{array}
\hspace{-1mm}\right]\hspace{-1mm}$,} {\small$B_{z10}=\left[\hspace{-1mm}
  \begin{array}{cccccc}
       0.86 & 0.01 & 0.91\\
       0.96 & 0.50 & 0.99\\
       0.80 & 0.93 & 0.63\\
       0.87 & 0.66 & 0.99\\
       0.26 & 0.21 & 0.76\\
       0.03 & 0.48 & 0.65
  \end{array}
\hspace{-1mm}\right]\hspace{-1mm}$;} $Q_Z=I_{12}$, $R_Z=I_{6}$.

Besides, the initial positions and velocities, the desired trajectories of the leader AUV and the follower AUV are given as ($i=0,1$): 
\begin{equation}\label{coefi}
\left\{ \begin{array}{ll} 
\eta_{di}(k)&=[x_{di}(k),y_{di}(k),\psi_{di}(k)]^{\top},\\
x_{d0}(k)&=6sin(0.2k)+1.2, x_{0}(0)=8,\\
y_{0}(k)&=4sin(0.2k)+1.2, y_{0}(0)=6,\\
\psi_{d0}(k)&=sin(0.2k), \psi_{0}(0)=1.5,\\
x_{d1}(k)&=4sin(0.2k)+1.5, x_{1}(0)=6,\\
y_{d1}(k)&=2sin(0.2k)+1.4, y_{1}(0)=4,\\
\psi_{d1}(k)&=2sin(0.2k), \psi_{1}(0)=1,\\
\nu_i(k)&=[u_i(k),v_i(k),r_i(k)]^{\top},\\
u_i(k)&=[x_i(k)-x_i(k-1)], u_0(0)=1,u_1(0)=2.1,\\
v_i(k)&=[y_i(k)-y_i(k-1)], v_0(0)=2,v_1(0)=1.4,\\
r_i(k)&=[\psi_i(k)-\psi_i(k-1)], r_0(0)=0.5,r_1(0)=0.3.
\end{array} \right.
\end{equation}

According to the coefficient matrices given above, using \eqref{coefi} and the results of Theorem \ref{th4}, the gain matrix of the optimal control strategy $u_{z0}(k), u_{z1}(k)$ in \eqref{contr10} can be calculated as:
\begin{align*}
H_{z00}=\left[\hspace{-1mm}
  \begin{array}{cccccc}
   -0.50 &   -0.94 &   -0.27 &    1.48 &    0.47 &   -0.78\\
    0.88 &   -0.16 &    0.41 &   -1.80 &   -0.43 &    1.30\\
   -1.21 &    0.64 &   -0.40 &    1.35 &   -0.37 &   -0.71  
\end{array}
\hspace{-1mm}\right]\hspace{-1mm}, 
\end{align*}
\begin{align*}
H_{z01}=\left[\hspace{-1mm}
  \begin{array}{cccccc}
   0.03 &    0.02 &   -0.01 &    0.01 &   -0.06 &   -0.09\\
  -0.02 &   -0.02 &    0.01 &    0.01 &    0.06 &    0.09\\
  -0.01 &    0.02 &    0.02 &    0.01 &   -0.03 &   -0.08
\end{array}
\hspace{-1mm}\right]\hspace{-1mm},
\end{align*}
\begin{align*}
H_{z10}=\left[\hspace{-1mm}
  \begin{array}{cccccc}
 0.76 &    0.34 &    0.42 &   -0.46 &   -0.08 &    0.74\\ 
 1.81 &   -1.21 &   -0.35 &   -0.51 &    0.87 &    0.88\\
-0.18 &    1.49 &   -0.52 &   -0.05 &   -0.32 &    1.08
\end{array}
\hspace{-1mm}\right]\hspace{-1mm},
\end{align*}
\begin{align*}
H_{z11}=\left[\hspace{-1mm}
  \begin{array}{cccccc}
 -0.14 &    0.04 &   -0.01 &   -0.03 &   -0.03 &   -0.13\\
 -0.33 &    0.09 &    0.13 &   -0.07 &   -0.10 &   -0.37\\
 -0.03 &    0.03 &   -0.01 &   -0.03 &   -0.11 &   -0.12
\end{array}
\hspace{-1mm}\right]\hspace{-1mm}.
\end{align*}
And, $\mathcal{P}_Z$ in Riccati equation \eqref{ricd} is calculated as follows:
{\small$\hspace{-8mm}\mathcal{P}_Z=\left[\hspace{-1mm}
  \begin{array}{cccccc}
   38.38 &  -29.39 &    6.64 &  -11.61 &   13.88 &   11.58\\  
  -29.39 &   39.88 &  -11.18 &   14.71 &  -14.83 &   -2.87\\     
    6.64 &  -11.18 &   15.81 &  -17.47 &    0.96 &   -0.92\\    
  -11.61 &   14.71 &  -17.47 &   43.38 &    2.22 &  -15.55\\    
   13.88 &  -14.83 &    0.96 &    2.22 &    9.34 &   -0.71\\    
   11.58 &   -2.87 &   -0.92 &  -15.55 &   -0.71 &   20.04\\    
   -1.80 &    2.25 &   -1.18 &    1.05 &   -1.06 &    0.71\\     
    0.58 &   -0.62 &    1.17 &   -0.77 &    0.29 &   -0.52\\    
    0.23 &    0.20 &   -0.08 &   -0.12 &   -0.02 &    0.47\\    
   -0.77 &    1.55 &   -0.96 &    0.68 &   -0.65 &    1.01\\     
   -0.71 &    0.87 &   -2.02 &    1.55 &   -0.39 &    0.93\\    
   -1.79 &    1.16 &   -3.67 &    2.85 &   -0.58 &    0.85  
 \end{array} \right.$}
  {\small$\left.\begin{array}{cccccc}
   -1.80 &    0.58 &    0.23 &   -0.77 &   -0.71 &   -1.79\\
    2.25 &   -0.62 &    0.20 &    1.55 &    0.87 &    1.16\\
   -1.18 &    1.17 &   -0.08 &   -0.96 &   -2.02 &   -3.67\\
    1.05 &   -0.77 &   -0.12 &    0.68 &    1.55 &    2.85\\
   -1.06 &    0.29 &   -0.02 &   -0.65 &   -0.39 &   -0.58\\
    0.71 &   -0.52 &    0.47 &    1.01 &    0.93 &    0.85\\
    1.50 &   -0.18 &    0.03 &    0.33 &    0.30 &    0.46\\
   -0.18 &    1.24 &    0.07 &   -0.11 &   -0.46 &   -0.90\\
    0.03 &    0.07 &    1.12 &    0.11 &   -0.11 &   -0.41\\
    0.33 &   -0.11 &    0.11 &    1.31 &    0.22 &    0.16\\
    0.30 &   -0.46 &   -0.11 &    0.22 &    1.94 &    1.75\\
    0.46 &   -0.90 &   -0.41 &    0.16 &    1.75 &    4.68
\end{array}
\hspace{-1mm}\right]\hspace{-1mm}$.}

It can be verified that the inequality \eqref{ricdi} holds. Therefore, according to Theorem \ref{th4}, LF-AUV system \eqref{sta7} can be feedback stabilizable.

In fact, substituting the optimal control strategy $u_{zi}(k), i=0, 1$ into \eqref{sta7}, the tracking errors in $x, y, \psi$ of the leader AUV and the follower AUV can be calculated and the results are shown in Fig. \ref{2} and Fig. \ref{4}. Additionally, based on the tracking errors calculated above and the desired trajectories \eqref{coefi}, the actual $x, y, \psi$ of the leader AUV and the follower AUV can be calculated and the results are shown in Fig. \ref{1} and Fig. \ref{3}. As expected, the Figs. \ref{2}-\ref{3} show that despite the presence of disturbances, modeling errors, random interference, and other factors, the tracking errors of both the leader AUV and the follower AUV converge to 0.

Next, according to \eqref{act}, the force vectors $\tau_{si}(k)=[\tau_{ui}(k), \tau_{vi}(k), \tau_{ri}(k)]^\top, i=0,1$ can be calculated as follows: 
{\small\begin{equation*}
\hspace{-4mm}\left\{ \begin{array}{ll} 
\tau_{u0}(k)&=37.93[u_0(k)-u_0(k-1)]-72.5v_0(k)r_0(k)+1.93r_0^2(k)\\
&+13.5u_0(k)+1.62u_0^2(k)+1.62|u_0(k)|u_0(k)-4.48v_0^2(k)\\
&+35.50r_0(k)v_0(k),\\
\tau_{v0}(k)&=72.5[v_0(k)-v_0(k-1)]+37.93u_0(k)r_0(k)\\
&-1.93[r_0(k)-r_0(k-1)]+175.21v_0(k)+1310|v_0(k)|v_0(k)\\
&-24.96|r_0(k)|v_0(k)+25.06r_0(k)-0.63|r_0(k)|r_0(k),\\
\tau_{r0}(k)&=46.76[r_0(k)-r_0(k-1)]-34.57u_0(k)v_0(k)\\
&-1.93r_0(k)u_0(k)-23.83v_0(k)-48.01|v_0(k)|v_0(k)\\
&-31.42+94|r_0(k)|+93.16|v_0(k)|,\\
\tau_{u1}(k)&=20.74[u_1(k)-u_1(k-1)]-38.24v_1(k)r_1(k)+6.19r_1^2(k)\\
&+3u_1(k)+3.37u_1^2(k)+1.25|u_1(k)|u_1(k)-40.99v_1^2(k)\\
&+17.86r_1(k)v_1(k),\\
\tau_{v1}(k)&=38.24[v_1(k)-v_1(k-1)]+20.74u_1(k)r_1(k)\\
&-6.19[r_1(k)-r_1(k-1)]+26.72v_1(k)+45.29|v_1(k)|v_1(k)\\
&-11.72|r_1(k)|v_1(k)+12.67r_1(k)-0.34|r_1(k)|r_1(k),\\
\tau_{r1}(k)&=26.12[r_1(k)-r_1(k-1)]+17.49u_1(k)v_1(k)\\
&-6.19r_1(k)u_1(k)-26.09v_1(k)-24.58|v_1(k)|v_1(k)\\
&-35.56-0.03|r_1(k)|-00.94|v_1(k)|.
\end{array} \right.
\end{equation*}}

Finally, a finite time horizon $N$ is selected. Using the coefficients given above, the optimal cost function can be calculated from Theorem \ref{th2}. The relationship between this optimal cost function and the time horizon $N$ is illustrated in Fig. \ref{5}. As shown in Fig. \ref{5}, the optimal cost function converges as $N$ increases, and the converged value coincides exactly with the optimal value of the LF-AUV system's cost function, i.e., $\mathcal{J}_Z^{*}=6390.51$, as calculated from Theorem \ref{th4}.

\begin{figure}[ht]
\centerline{\includegraphics{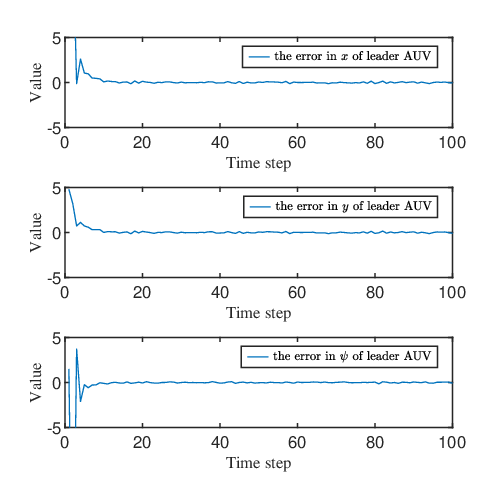}}
\caption{Simulation curves for the error in $x$, $y$ and $\psi$ of leader AUV.}
\label{2}
\end{figure}
\begin{figure}[ht]
\centerline{\includegraphics{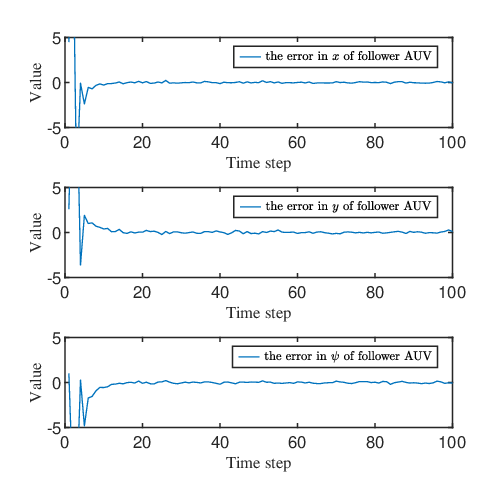}}
\caption{Simulation curves for the error in $x$, $y$ and $\psi$ of follower AUV.}
\label{4}
\end{figure}
 \begin{figure}[ht]
\centerline{\includegraphics{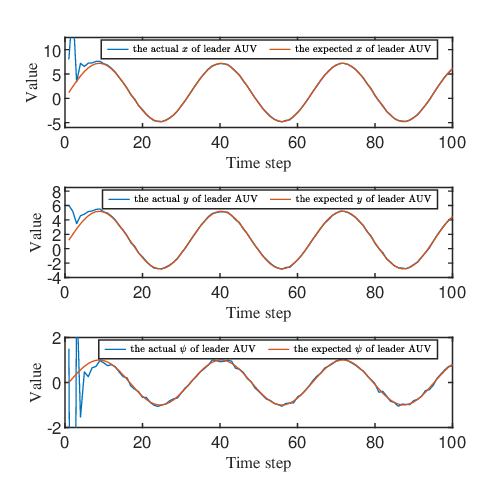}}
\caption{Simulation curves for the actual and the expected $x$, $y$ and $\psi$ of leader AUV.}
\label{1}
\end{figure}
\begin{figure}[ht]
\centerline{\includegraphics{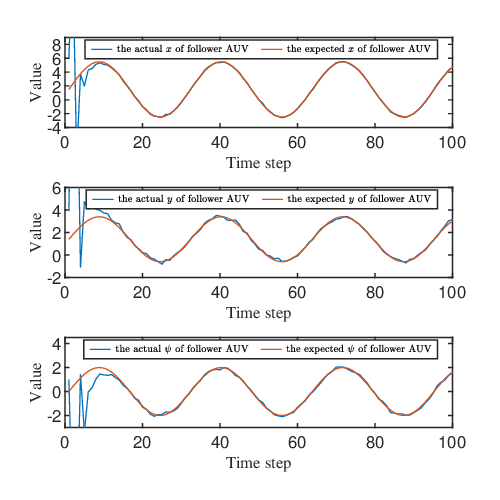}}
\caption{Simulation curves for the actual and the expected $x$, $y$ and $\psi$ of follower AUV.}
\label{3}
\end{figure}
\begin{figure}[ht]
\centerline{\includegraphics{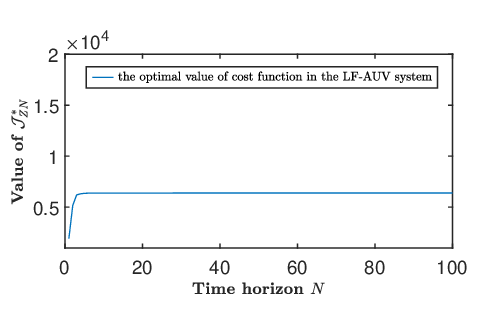}}
\caption{Simulation curve for the optimal value of the cost function in the LF-AUV system.}
\label{5}
\end{figure}
 
\section{Conclusion}\label{ss8}

In this paper, the optimal decentralized LQ control problem for LFNSs with asymmetric information has been thoroughly studied in both finite-horizon and infinite-horizon. To address the challenges induced by asymmetric information structure, an optimal iterative estimator has been derived and the optimal decentralized control strategy has been developed by decoupling the FBSDEs through orthogonal decomposition. Moreover, the necessary and sufficient conditions for feedback stabilization of infinite-horizon have been proposed. As an application, the theoretical results of this paper are applied to trajectory tracking control of a LF-AUV system, with numerical simulations demonstrating the effectiveness of the proposed method. For future research, we will focus on extending these results to more general decentralized control scenarios. Furthermore, we will implement and test the proposed algorithms through experimental validation on existing AUV platforms.


\begin{thebibliography}{99}   
\bibitem{yx2010}
K. You, L. Xie, Minimum data rate for mean square stabilization of discrete LTI systems over lossy channels, IEEE Transactions on Automatic Control, vol. 55, no. 10, pp. 2373-2378, 2010.
  
\bibitem{lm2011}
G. M. Lipsa, N. C. Martins, Optimal memoryless control in Gaussian noise: a simple counterexample, Automatica, vol. 47, no. 3, pp. 552- 558, 2011.

\bibitem{ml2014}
K. H. Movric, F. L. Lewis, Cooperative optimal control for multiagent systems on directed graph topologies, IEEE Transactions on Automatic Control, vol. 59, no. 3, pp. 769-774, 2014.  

\bibitem{rrj2016}
M. Rabi, C. Ramesh, K. H. Johansson, Separated design of encoder and controller for networked linear quadratic optimal control, SIAM Journal on Control and Optimization, vol. 54, no. 2, pp. 662-689, 2016.  

\bibitem{qxz2021} 
Q. Qi, L. Xie, H. Zhang, Optimal control for stochastic systems with multiple controllers of different information structures, IEEE Transactions on Automatic Control, vol. 66, no. 9, pp. 4160-4175, 2021.

\bibitem{w1968}
H. S. Witsenhausen, A counterexample in stochastic optimum control, SIAM Journal on Control, vol. 6, no. 1, pp. 131-147, 1968.

\bibitem{nglb2014}
A. Nayyar, A. Gupta, C. Langbort and T. Basar, Common information based Markov perfect equilibria for stochastic games with asymmetric information: finite games IEEE Transactions on Automatic Control, vol. 59, no. 3, pp. 555-570, 2014.

\bibitem{glb2017}
A. Gupta, C. Langbort and T. Başar, Dynamic games with asymmetric information and resource constrained players with applications to security of cyberphysical systems, IEEE Transactions on Control of Network Systems, vol. 4, no. 1, pp. 71-81, 2017.

\bibitem{oan2016}
Y. Ouyang, S. M. Asghari, A. Nayyar, Optimal local and remote controllers with unreliable communication, IEEE 55th conference on decision and control (CDC), pp. 6024-6029, 2016.

\bibitem{lx2018}
X. Liang, J. Xu, Control for networked control systems with remote and local controllers over unreliable communication channel, Automatica, vol. 98, pp. 86-94, 2018.

\bibitem{qxzl2025}
Q. Qi, L. Xie, H. Zhang, X. Liang, Jointly optimal local and remote controls for networked multiple systems with multiplicative noises and unreliable uplink channels, IEEE Transactions on Automatic Control, vol. 70, no. 2, pp. 1054-1067, 2025.

\bibitem{lqzx2021}
X. Liang, Q. Qi, H. Zhang, L. Xie, Decentralized control for networked control systems with asymmetric information, IEEE Transactions on Automatic Control, vol. 67, no. 4, pp. 2076-2083, 2022.

\bibitem{wlll2024}
N. Wang, X. Liang, H. Li, X. Lu, Decentralized optimal control and stabilization of interconnected systems with asymmetric information, IEEE/CAA Journal of Automatica Sinica, vol. 11, no. 3, pp. 698-707, 2024. 

\bibitem{hc1972}
Y. C. Ho and K. C. Chu, Team decision theory and information structures in optimal control problems, IEEE Transactions on Automatic Control, vol. 17, no. 1, pp. 15-22, 1972.

\bibitem{sl2010}
J. Swigart, S. Lall, An explicit dynamic programming solution for a decentralized two-player optimal linear-quadratic regulator, Proceedings of Mathematical Theory of Networks and Systems, pp. 1443-1447, 2010.

\bibitem{nkj2018}
N. Nayyar, D. Kalathil, R. Jain, Optimal decentralized control with asymmetric one-step delayed information sharing, IEEE Transactions on Control of Network Systems, vol. 5, no. 1, pp. 653-663, 2018.

\bibitem{am2022}
M. Afshari, A. Mahajan, Decentralized linear quadratic systems with major and minor agents and non-Gaussian noise, IEEE Transactions on Automatic Control, vol. 68, no. 8, pp. 4666-4681, 2022.

\bibitem{ils2007}
B. Innocenti, B. Lopez, J. Salvi, A multi-agent architecture with cooperative fuzzy control for a mobile robot, Robotics and Autonomous Systems, vol. 55, no. 12, pp. 881–891, 2007.

\bibitem{vrc2011}
M. Vinyals, J. A. Rodriguez-Aguilar, J. Cerquides, A survey on sensor networks from a multiagent perspective, The Computer Journal, vol. 54, no. 3, pp. 455–470, 2011.

\bibitem{dzry2016}
 X. Dong, Y. Zhou, Z. Ren, Y. Zhong, Time-varying formation tracking for second-order multi-agent systems subjected to switching topologies with application to quadrotor formation flying, IEEE Transactions on Industrial Electronics, vol. 64, no. 6, pp. 5014–5024, 2016.
 
 \bibitem{gmt2007}
 A. Ganti, E. Modiano, J. N. Tsitsiklis, Optimal transmission scheduling in symmetric communication models with intermittent connectivity, IEEE Transactions on Information Theory, vol. 53, no. 3, pp. 998-1008, 2007.

\bibitem{lyz2019}
J. Li, R. K. Yuan, H. H. Zhang, Research on multi-AUV formation control algorithm based on leader-follower method, Chinese Journal of Scientific Instrument, vol. 40, no. 6, 2019.

\bibitem{wpz2023}
L. L. Wang, W. Pang, D. Q. Zhu, Event-triggered model predictive control algorithm for multi-AUV formation, Ordnance Industry Automation, vol. 42, no. 10, 2023.
 
 \bibitem{dzlcwm2024}
W, J. Ding, G. Z. Zhang, H. M. Liu, Y. J. Chai, C. Y. Wang, Z. Y. Mao, Tracking control of underactuated autonomous underwater vehicle formation under current disturbance and communication delay, Acta Ordnance Engineering, vol. 45, no. 1, pp. 184-196, 2024.

\bibitem{lnhkb2024}
Q. Li, J. Nie, H. D. Huang, Q. Y. Kong, Y. Y. Ben, A master-slave cooperative localization method for multiple AUVs based on cognitive mechanism of hippocampus, Journal of Chinese Inertial Technology, vol. 32, no. 1, 2024.

\bibitem{yxyg2023}
T. Yan, Z. Xu, S. Yang, S. Gadsden, Formation control of multiple autonomous underwater vehicles: a review, Intelligence \& Robotics, vol. 3, no. 1, pp. 1-22, 2023.

\bibitem{s2002}
D. Subbaram Naidu, Optimal control systems, CRC Press LLC, pp. 101-147, 2002.

\bibitem{am1979}
B. D. O. Anderson, J. B. Moore, Optimal filtering, Prentice-Hall, Englewood Cliffs, pp. 23-35, 1979.
 
\bibitem{iyb2006}
O.C. Imer, S. Yüksel, T. Basar, Optimal control of LTI systems over unreliable communication links, Automatica, vol. 42, pp. 1429–1439, 2006.

 \bibitem{wsyzj2018}
N. Wang, S. F. Su, J. C. Yin, Z. J. Zheng, M. Joo Er, Global asymptotic model-free trajectory-independent tracking control of an uncertain marine vehicle: an adaptive universe-based fuzzy control approach, IEEE Transactions on Fuzzy Systems, vol. 26, no. 3, pp. 1613–1625, 2018.

\bibitem{qxfy2018}
Y. H. Qu, B. Xiao, Z. Z. Fu, D. L. Yuan, Trajectory exponential tracking control of unmanned surface ships with external disturbance and system uncertainties, ISA Transactions on ScienceDirect, vol. 78, pp. 47–55, 2018.

\bibitem{w2020}
X. H. Wang, Active fault tolerant control for unmanned underwater vehicle with actuator fault and guaranteed transient performance, IEEE Transactions on Intelligent Vehicles, vol. 6, no. 3, pp. 470-479, 2020.
\end{thebibliography}
\end{document}